\numberwithin{equation}{section}
\def \torus {{\mathbb{T}}}
\def \real {{\mathbb{R}}}
\theoremstyle{plain}
\newtheorem{maintheorem}{Theorem}
\newcommand{\R}{\mathbb{R}}
\newcommand{\Z}{\mathbb{Z}}
\newtheorem{theorem}{Theorem}[section]
\newtheorem{proposition}[theorem]{Proposition}
\newtheorem{lemma}[theorem]{Lemma}
\newtheorem{definition}[theorem]{Definition}
\theoremstyle{remark}
\newtheorem{remark}[theorem]{Remark}
\begin{document}
\title[Lyapunov exponents everywhere and rigidity]
{Lyapunov exponents everywhere and rigidity}
\author[F. Micena]{Fernando Pereira Micena}
\address{Instituto de Matem\'{a}tica e Computa\c{c}\~{a}o,
  IMC-UNIFEI, Itajub\'{a}-MG, Brazil.}
\email{fpmicena82@unifei.edu.br}

\author[R. de la Llave]{Rafael de la Llave}
\address{
School of Mathematics,
Georgia Institute of Technology,
686 Cherry St. ,  Atlanta GA 30332-1160 }
\email{rafael.delallave@math.gatech.edu}

\thanks{R.L. was partially supported by NSF grant DMS-1800241. F. Micena appreciates the unconditional support of his family. Also, Micena is grateful to Rafael de la Llave for the opportunity to write an article with him. The authors thank the anonymous referees for their suggestions and valuable comments.}
\baselineskip=18pt              

\begin{abstract}
In the present work we obtain rigidity results analysing the set of regular points, in the sense of Oseledec's Theorem. It is presented a study on the possibility of  an Anosov diffeomorphisms having all Lyapunov
exponents defined everywhere. We prove that this condition implies local rigidity of an Anosov automorphism of the torus $\mathbb{T}^d, d \geq 3,$ $C^1-$close to a linear automorphism diagonalizable over $\mathbb{R}$ and such that its characteristic polynomial is irreducible over $\mathbb{Q}.$

\end{abstract}

\subjclass[2010]{}
\keywords{}

\maketitle


\section{Introduction and statements of results}\label{sec:intro}
One of the most celebrated theorems in modern dynamics is
Oseledets multiplicative ergodic theorem,
(see \cite{Oseledets, BarreiraP, Barreira, Viana14} for precise statements),
which establishes the existence of Lyapunov exponents for
almost all points with respect to any borelian, probability and invariant measure. In the context of Oseledec's Theorem we call \emph{``regular''} a point for which are defined all Lyapunov exponents.
When a point is not regular we call it an \emph{``irregular''} point.

The importance of Oseledec's Theorem is that it provides a link between
ergodic properties of systems and geometric properties of the infinitesimal
displacements around orbits. This is, of course, the basis of
the very well known ``smooth ergodic theory'' or ``Pesin theory''
\cite{BarreiraP}.

In our results we use Lyapunov exponents to obtain a new description of rigidity of  Anosov diffeomorphisms on $\mathbb{T}^d, d \geq 3,$ analysing the set of regular points of the  Anosov diffeomorphisms.  Let us contextualize better.

\begin{definition}
Let $M$ be a $C^{\infty}$ compact, connected and boundaryless manifold and $f:M \rightarrow M$ be a diffeomorphism. We say that $f$ is an Anosov diffeomorphims if there are numbers $0 < \beta < 1 < \eta, C > 0$ and a continuous splitting  $T_xM = E^u_f(x) \oplus E^s_f(x),$  invariant over $Df,$  such that
$$ ||Df^n(x) \cdot v || \geq \frac{1}{C} \eta^n ||v||, \forall v \in E^u_f(x),$$
$$ ||Df^n(x) \cdot v || \leq C \beta^n ||v||, \forall v \in E^s_f(x).$$
\end{definition}

When $M = \mathbb{T}^d, d \geq 2,$ it is known by Franks, see \cite{FRANKS}, that an Anosov diffeomormphism $f: \mathbb{T}^d \rightarrow  \mathbb{T}^d $ is conjugated with its linearization $L:  \mathbb{T}^d \rightarrow  \mathbb{T}^d,$ that is, there is a continuous function $h: \mathbb{T}^d \rightarrow  \mathbb{T}^d $ such that $$ h \circ f = L \circ h,$$
where $L$ is given by the matrix of the linear isomorphism $f_{\ast} : \pi_1 (\mathbb{T}^d) =\mathbb{ Z}^d \rightarrow \mathbb{Z}^d.$

For a given $C^1-$Anosov diffeomorphism $f: M \rightarrow M$ denote by $R(f)$ the set of regular points of $f$ (in the Oseledec's Theorem sense). We recall that given $x \in R(f) $ and $ v \in T_xM \setminus \{0\},$

$$\displaystyle\lim_{|n| \rightarrow +\infty} \frac{1}{n} \log(|| Df^n(x) \cdot v ||) = \lambda (x, v).$$ Consider $x \in R(f),$ we can verify from definition that for a given $y \in R(f)$ such that $y \in W^{s,u}_f(x),$ then $\lambda(y,v) = \lambda(x,v).$
The value $\lambda (x,v) $ is called a Lyapunov exponent of $f$ in direction $v$ of $x.$ 

To find conditions for  $f$ and $L$ to be $C^{1}-$conjugated is in fact the core of rigidity questions. In this setting, Lyapunov exponents play an important role. The results in this direction are obtained by  comparison between Lyapunov exponents of $f$ and the  Lyapunov exponents of its linearization $L,$ as in \cite{Llave92, Go, GoGu} for instance.

Here we are proposing results of rigidity without making direct comparison between Lyapunov exponents of $f$ and those of its linearization $L.$ It is a different approach, if we compare with known results as presented in \cite{Llave92, Go, GoGu}. We prove that if $f$ has all Lyapunov exponents everywhere, then we get $C^1-$conjugacy with $L.$ Of course, in some moment we will need, under our assumptions, to prove coincidence of periodic data between $f$ and $L$ and apply old results.

The tooling novelty  here is to use  unstable entropies (see \cite{Hua}) to get coincidence of periodic data between $f$ and $L.$ Although in the end we can apply old results, we will present a different approach  to pass from continuous conjugacy to $C^1-$conjugacy by using conformal metrics. Our main results are.

\begin{maintheorem}\label{teo4}
Let $L: \mathbb{T}^d \rightarrow \mathbb{T}^d , d \geq 2,$ be a
linear Anosov automorphism, diagonalizable over $\mathbb{R},$
irreducible over $\mathbb{Q},$ such that its eigenvalues have distinct modulus. Let $f: \mathbb{T}^d \rightarrow \mathbb{T}^d , d \geq 3,$ be $C^{1+ \alpha}-$map with $ 0 < \alpha < 1,$ sufficiently $C^1-$close to $L$ (and hence Anosov).
If every point $x \in \mathbb{T}^d$ is (Lyapunov) regular for $f,$ then $f$ is
 $C^{1+\varepsilon}$ conjugated to $L,$ for some
 $\varepsilon > 0.$
\end{maintheorem}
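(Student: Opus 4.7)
The plan is to exploit the hypothesis that every point is Lyapunov regular to force matching of periodic data between $f$ and $L$ through a sub-unstable entropy argument, and then to invoke classical rigidity results (enhanced by a conformal-metric bootstrap) to upgrade the Franks conjugacy to a $C^{1+\varepsilon}$ conjugacy.

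First I would set up the splitting. Since $L$ is diagonalizable over $\mathbb{R}$ with $d$ distinct moduli, it carries a finest dominated splitting into $d$ one-dimensional bundles. The $C^1$-closeness hypothesis persists this structure, so $f$ admits an invariant dominated splitting $T\mathbb{T}^d=E^1_f\oplus\cdots\oplus E^d_f$ with $\dim E^i_f=1$, together with all coarser partial-hyperbolic splittings. Each sub-bundle $E^1_f\oplus\cdots\oplus E^k_f$ (resp.\ $E^k_f\oplus\cdots\oplus E^d_f$) on the stable (resp.\ unstable) side integrates to a continuous $f$-invariant foliation with $C^{1+\alpha}$ leaves, and the strong one-dimensional bundles $E^i_f$ themselves integrate to one-dimensional $f$-invariant foliations $\mathcal{F}^i_f$.

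Next I would exploit the regularity hypothesis. For every $x\in\mathbb{T}^d$ and every index $i$, the limit $\lambda_i(x)=\lim_{|n|\to\infty}\frac{1}{n}\log\|Df^n(x)|_{E^i_f(x)}\|$ exists. Each $\lambda_i$ is Borel, $f$-invariant, and constant along the leaves of the strong stable and strong unstable foliations that lie transverse to $E^i_f$. This pointwise existence of all exponents, on an open (in fact full) set of measures, is precisely what is needed to convert the Ledrappier--Young/Hua formula into sharp equalities: for any $f$-invariant probability $\mu$ and any unstable sub-foliation $\mathcal{F}^{i,u}_f$ grouping the $j\geq i$ bundles, the partial entropy equals $\int\sum_{j\geq i}\lambda_j(x)\,d\mu(x)$. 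Taking the supremum over $\mu$ produces a topological sub-unstable entropy for $f$. Because Franks' semiconjugacy $h$ intertwines $f$ and $L$ and respects the corresponding (strong) foliation structures, the topological sub-unstable entropies of $f$ agree with those of $L$, which are simply the sums $\sum_{j\geq i}\log|\mu_j|$. Varying $i$ and subtracting, one pins down each $\lambda_i$ individually. Applied to the measure supported on a periodic orbit of $f$, this equates the periodic Lyapunov exponent of $f$ in each sub-bundle with $\log|\mu_i|$, the corresponding exponent of $L$.

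Once periodic data of $f$ and $L$ coincide, I would invoke the Llave--Gogolev--Guysinsky-type rigidity results quoted in the introduction to conclude that the Franks conjugacy $h$ is $C^{1+\varepsilon}$. For the authors' alternative conformal-metric route, one would solve Livšic cohomological equations for the cocycles $\log\|Df|_{E^i_f}\|$ (using matching of periodic data as the Livšic obstruction) to produce $f$-invariant metrics on each $E^i_f$ that pull back from the corresponding flat metrics on $E^i_L$; the conjugacy $h$ is then absolutely continuous and conformal along each one-dimensional leaf of $\mathcal{F}^i_f$, and Journé's lemma applied successively along the $d$ transverse foliations upgrades the joint regularity to $C^{1+\varepsilon}$.

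The main obstacle, I expect, will be the third step: turning the qualitative ``everywhere regular'' hypothesis into the quantitative matching of $\lambda_i(p)$ with $\log|\mu_i|$ at each periodic orbit. The difficulty is that the Ledrappier--Young/Hua formula only gives equality for measures with sufficient absolute continuity of the conditional measures along $\mathcal{F}^{i,u}_f$; one must argue that everywhere regularity, combined with the one-dimensionality of each $E^i_f$ and the topological conjugacy with $L$, is strong enough to force this equality for every invariant measure (including atomic ones on periodic orbits). The irreducibility of the characteristic polynomial of $L$ over $\mathbb{Q}$ should enter here, preventing degenerate resonances among the $\lambda_i$ that would otherwise obstruct the telescoping extraction of individual exponents.
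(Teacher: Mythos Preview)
Your overall architecture matches the paper: show everywhere-regularity forces the periodic data of $f$ to coincide with that of $L$, then invoke Gogolev/Saghin--Yang rigidity (or the conformal-metric route via Livsic and Journ\'{e}). But the mechanism you propose for the crucial step fails, and you identify the failure without resolving it. The Ledrappier--Young/Hua equality does not hold for every invariant measure: for the atomic measure $\delta_p$ on a periodic orbit the partial metric entropy vanishes while the sum of unstable exponents is strictly positive, so one cannot read off $\sum_j\lambda_j(p)$ from any measure-theoretic entropy identity applied to $\delta_p$. Everywhere-regularity does not rescue this; the inequality is strict for atomic measures regardless.

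The paper's route avoids measures altogether. First, a Hopf argument upgrades your observation that each $\lambda_i$ is constant along stable and along unstable leaves to the conclusion that $\lambda_i$ is constant on all of $\mathbb{T}^d$ (local product structure plus connectedness: for $z$ near $x_0$ pick $z'\in W^u_f(z)\cap W^s_f(x_0)$ and chain the forward and backward limits through $z'$). Constant exponents then make the Livsic equation $\log|\mathrm{Jac}^u_{(1,i)}f|-\sum_{j\le i}\lambda^u_j=\phi\circ f-\phi$ solvable, so $|\mathrm{Jac}^u_{(1,i)}f^n(x)|\asymp e^{n\sum_{j\le i}\lambda^u_j}$ uniformly in $x$. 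Now the \emph{volume-growth} characterization of unstable topological entropy (Theorem~C of Hu--Hua--Wu), not the variational principle, gives $h_{top}(f,W^{u,f}_{(1,i)})=\sum_{j\le i}\lambda^u_j$ directly. Since the conjugacy $h$ carries $W^{u,L}_{(1,i)}$ to $W^{u,f}_{(1,i)}$, these entropies match those of $L$, and your telescoping recovers $\lambda^u_j=\log|\beta^u_j|$ for each $j$. Finally, irreducibility over $\mathbb{Q}$ plays no role in this exponent extraction; it enters only as a hypothesis of the Gogolev/Saghin--Yang rigidity theorem at the last step, and is genuinely needed there (the paper's Theorem~B produces reducible examples with matching periodic data but no Lipschitz conjugacy).
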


In contraposition with the previous rigidity result, we
just recall that there are
some known examples (we formulate them as theorems to keep
the symmetry of the exposition) which lead to:

\begin{maintheorem}\label{teo7}
For any $d \ge 4$,
there are Anosov maps of $\torus^d$ which have
Lyapunov exponents everywhere and are not Lipschitz conjugate to
linear.   Such examples, can be found in any $C^\infty$
neighborhood  of linear automorphisms of $\torus^d$.
\end{maintheorem}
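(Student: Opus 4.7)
The plan is a direct construction. Choose $A \in \mathrm{SL}(2, \Z)$ hyperbolic with eigenvalues $\lambda > 1 > \lambda^{-1}$ and set $L = A \oplus A$ on $\T^4$ (for $d > 4$ replace $L$ by $L \oplus B$ for any linear Anosov $B$ on $\T^{d-4}$). For a smooth $\phi \colon \T^2 \to \T^2$ and small $\varepsilon > 0$, consider the skew product
\[
f_\varepsilon(x, y) = \bigl(Ax,\; Ay + \varepsilon\,\phi(x)\bigr),\qquad (x, y) \in \T^2 \times \T^2.
\]
By openness of the Anosov property in the $C^1$ topology, $f_\varepsilon$ is Anosov and lies in any prescribed $C^\infty$ neighborhood of $L$ for $\varepsilon$ small enough.

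For Lyapunov exponents everywhere I would exploit the block-triangular form of the derivative,
\[
Df_\varepsilon^n(x, y) = \begin{pmatrix} A^n & 0 \\ \varepsilon\, C_n(x) & A^n \end{pmatrix},\qquad C_n(x) = \sum_{k=0}^{n-1} A^{n-1-k}\, D\phi(A^k x)\, A^k.
\]
Each summand of $C_n(x)$ has operator norm $\lesssim \lambda^{n-1}$, so $\|\varepsilon C_n(x)\| \leq C\, n\, \lambda^n$ uniformly in $x$. For any tangent vector $(v_1, v_2)$ the quantity $\|Df_\varepsilon^n(v_1, v_2)\|$ is therefore pinched between polynomial multiples of $\lambda^{-n}$ and $\lambda^n$: the upper bound from the estimate on $C_n$, and the lower bound from the Anosov splitting $E^u(f_\varepsilon) \oplus E^s(f_\varepsilon)$, which yields uniform exponential growth or decay of each component. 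The polynomial prefactors are killed under $\tfrac{1}{n}\log$, so $\lim_n \tfrac{1}{n}\log\|Df_\varepsilon^n(v_1, v_2)\|$ exists at every base point in every direction, taking the values $\pm\log\lambda$ (each with multiplicity $2$) matching the Lyapunov spectrum of $L$.

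To rule out Lipschitz conjugacy I would introduce a Jordan-type obstruction at a fixed point. Choose $\phi$ so that, writing $v^u$ for a unit vector spanning $E^u(A)$, the $E^u(A)$-component of $D\phi(0)\, v^u$ is nonzero (a generic condition). At the fixed point $(0, y_0)$ of $f_\varepsilon$ with $y_0 = (I - A)^{-1}\varepsilon\,\phi(0)$,
\[
Df_\varepsilon(0, y_0) = \begin{pmatrix} A & 0 \\ \varepsilon\, D\phi(0) & A \end{pmatrix}
\]
carries a nontrivial $2\times 2$ Jordan block for the eigenvalue $\lambda$ (and similarly for $\lambda^{-1}$), while $DL$ at its fixed point is diagonalizable. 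Since the similarity class of the periodic derivative is a $C^1$-invariant of the conjugacy class, $f_\varepsilon$ and $L$ are not $C^1$-conjugate; combining Rademacher a.e.\ differentiability of a bi-Lipschitz map with Journé-type regularity of conjugacies along stable and unstable foliations upgrades the obstruction to rule out Lipschitz conjugacy as well. The principal obstacle is precisely this upgrade: a Lipschitz conjugacy need not be differentiable at the specific fixed point, so one has to transport differentiability from nearby Lebesgue-generic points using the smoothness of the invariant foliations in order to detect the Jordan-form mismatch.
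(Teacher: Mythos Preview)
Your construction is genuinely different from the paper's. The paper perturbs $L(x,y)=(A^nx,B^my)$ with $A,B$ Anosov on $\T^2$ and $\T^{d-2}$ having \emph{different} expansion rates, so that the ratio $r=\dfrac{n\log|\mu|}{m\log|\lambda|}$ can be taken strictly below $1$; then \cite{Llave92} gives directly that the conjugacy is $C^{\alpha}$ for $\alpha<r$ but not for $\alpha>r$, hence in particular not Lipschitz. Your choice $L=A\oplus A$ is the resonant case $r=1$, and your obstruction is of Jordan--block type rather than H\"older type.

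The gap is exactly where you flag it, and your proposed patch does not close it. Journ\'e's lemma takes as input that the map is uniformly $C^{1+\nu}$ along two transverse foliations and outputs global $C^{1+\nu}$; it does not promote ``Lipschitz'' or ``differentiable Lebesgue--a.e.'' to $C^1$. Rademacher gives you $Dh$ on a full-measure set, but that set need not be $f$-invariant and there is no mechanism to propagate differentiability to the fixed point along the leaves. So the Jordan--form mismatch, as stated, only excludes a $C^1$ conjugacy.

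Your example does in fact fail to be Lipschitz conjugate to $L$, but the honest proof is not via Rademacher/Journ\'e. Two clean routes: (i) exploit that the conjugacy is unique in its homotopy class and has the explicit form $h(x,y)=(x,y+\tau(x))$ with $\tau^u(x)=\varepsilon\sum_{k\ge 0}\lambda^{-k-1}\phi^u(A^kx)$; a direct estimate shows $|\tau^u(sv^u)-\tau^u(0)|\asymp s\log(1/s)$ when the $E^u$--component of $D\phi(0)v^u$ is nonzero, so $h$ is log--Lipschitz but not Lipschitz; or (ii) compare growth rates on the local unstable manifold of the fixed point: for suitable $x\in W^u_f(p)$ one has $d(f^n(x),p)\asymp n\lambda^n\,d(x,p)$ while $L|_{E^u_L}=\lambda I$ forces $d(L^nh(x),h(p))=\lambda^n d(h(x),h(p))$, and bi-Lipschitz would make these comparable, a contradiction. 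Either of these replaces your last paragraph; the paper avoids the issue entirely by working at H\"older exponent $r<1$.

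A smaller point: your sentence ``the lower bound from the Anosov splitting \ldots\ yields uniform exponential growth or decay of each component'' is not enough to pin the exponents at $\pm\log\lambda$, since the Anosov constants $\eta,\beta$ are only close to $\lambda^{\pm 1}$. What actually gives regularity everywhere is the block--triangular structure together with $\det(Df_\varepsilon|_{E^u})=\lambda^2$: the largest singular value of $Df_\varepsilon^n|_{E^u}$ is at most $Cn\lambda^n$ and the smallest is at least $\lambda^{2n}/(Cn\lambda^{n})$, so every unstable vector has two-sided exponent exactly $\log\lambda$ (and symmetrically on $E^s$). Note also that Lemma~\ref{propuniform} of the paper is unavailable to you because your $L=A\oplus A$ has repeated eigenvalues.
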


The examples above show that in higher dimensions, Lyapunov exponents
everywhere does not guarantee the $C^1$ conjugacy to linear, much
less the $C^\infty$.

%

\section{Preliminaries}

In \cite{Hua} the authors deal with a notion of topological entropy $h_{top}(f, \mathcal{W} )$ of an invariant expanding foliation $\mathcal{W}$ of a diffeomorphism $f. $ They establish a variational principle in this sense and a relation between $h_{top}(f, \mathcal{W} )$ and volume growth of $\mathcal{W}. $

Here $W(x)$  denotes the leaf of $\mathcal{W}$ by $x.$ Given  $\delta  > 0,$  we denote by $W(x, \delta)$ the $\delta-$ball centered in $x$ on $W(x),$ with the induced Riemannian distance, which is denoted by $d_{W}.$

Given $x \in M, $ $\varepsilon > 0, $ $\delta > 0$ and $n \geq 1$ an integer number, let $N_{W}(f, \varepsilon, n, x, \delta)$ be the maximal cardinality of all sets $S \subset \overline{W(x, \delta)}$ such that  $\displaystyle\max_{j =0,\ldots, n-1} d_{W}(f^j(a), f^j(b)) \geq \varepsilon,$ for any $a \neq b$ elements in $S.$

\begin{definition}\label{uentropy} The unstable entropy of $f$ on $M,$ with respect to the expanding foliation $\mathcal{W}$ is given by
$$h_{top}(f, \mathcal{W} ) = \lim_{\delta \rightarrow 0} \sup_{x \in M} h^{\mathcal{W}}_{top}(f, \overline{W(x, \delta)}), $$
where
$$h^{\mathcal{W}}_{top}(f, \overline{W(x, \delta)}) = \lim_{\varepsilon \rightarrow 0} \limsup_{n \rightarrow +\infty} \frac{1}{n} \log(N_{W}(f, \varepsilon, n, x, \delta)). $$
\end{definition}

Define $\mathcal{W}-$volume growth by
 $$\chi_{\mathcal{W}}(f) = \sup_{x \in M } \chi_{\mathcal{W}}(x, \delta), $$
where
$$ \chi_{\mathcal{W}}(x, \delta) = \limsup_{n\rightarrow +\infty} \frac{1}{n} \log(Vol(f^n(W(x, \delta)))).$$

Note that, since we are supposing $\mathcal{W}$ an expanding foliation, the above definition is independent of $\delta$ and the Riemannian metric.

\begin{theorem}[Theorem C and Corollary C.1 of \cite{Hua}]\label{teoH} With the above notations
$$h_{top}(f, \mathcal{W} ) = \chi_{\mathcal{W}}(f).$$
Moreover $h_{top}(f) \geq h_{top}(f, \mathcal{W}). $
\end{theorem}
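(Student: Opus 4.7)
The plan is to establish the equality by two matching inequalities, each resting on the fact that leaves of an expanding foliation on a compact manifold have uniformly bounded geometry: there exist constants $c(\varepsilon), C(\varepsilon) > 0$ so that every leaf-ball of radius $\varepsilon$ has induced volume between $c(\varepsilon)$ and $C(\varepsilon)$. This follows from the continuity of $\mathcal{W}$ and the compactness of $M$, and will be used to trade separated/spanning counts on leaves for leaf-volumes.

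For the upper bound $h_{top}(f,\mathcal{W}) \leq \chi_{\mathcal{W}}(f)$, I would fix $x, \delta, \varepsilon$ and let $S \subset \overline{W(x,\delta)}$ realize $N_{W}(f,\varepsilon,n,x,\delta)$. The key observation is that the images $f^{n-1}(S)$ lie inside $f^{n-1}(\overline{W(x,\delta)})$, and any two distinct points of $S$ are at $d_W$-distance at least $\varepsilon$ at \emph{some} iterate $0 \le j \le n-1$. A standard Bowen-type maximality argument (or an initial reduction to separation at the final time using expansion) lets one place disjoint leaf-balls of radius $\varepsilon/2$ around each point of $f^{n-1}(S)$, whose volumes are each bounded below by $c(\varepsilon/2)$. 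Summing gives
\[
\#S \cdot c(\varepsilon/2) \;\leq\; \mathrm{Vol}\bigl(f^{n-1}(W(x,\delta))\bigr),
\]
and taking $\log$, dividing by $n$, and sending first $n\to\infty$, then $\varepsilon\to 0$ and $\delta \to 0$, gives the desired inequality after passing to the supremum over $x$.

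For the reverse inequality $h_{top}(f,\mathcal{W}) \geq \chi_{\mathcal{W}}(f)$, I would cover $f^{n-1}(\overline{W(x,\delta)})$ by leaf-balls of radius $\varepsilon$. By bounded geometry each such ball has volume at most $C(\varepsilon)$, so the minimal covering number satisfies
\[
\#\{\text{balls}\} \;\geq\; \frac{\mathrm{Vol}(f^{n-1}(W(x,\delta)))}{C(\varepsilon)}.
\]
Pulling the centers back through $f^{-(n-1)}$ and restricting to $\overline{W(x,\delta)}$ produces an $(n,\varepsilon)$-spanning set in the leaf metric, and the standard inequality $s_n(\varepsilon) \leq r_n(\varepsilon/2)$ between spanning and separated numbers converts this into a lower bound on $N_W(f,\varepsilon/2,n,x,\delta)$. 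The same limit procedure completes the equality. For the \emph{moreover} part, since the leaf metric $d_W$ dominates the ambient metric $d$, any $(n,\varepsilon)$-separated set inside $\overline{W(x,\delta)}$ in $d_W$ is automatically $(n,\varepsilon)$-separated in $d$, so $N_W(f,\varepsilon,n,x,\delta)$ is bounded by the usual Bowen separated number $N(f,\varepsilon,n)$, and passing to limits gives $h_{top}(f) \geq h_{top}(f,\mathcal{W})$.

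The main obstacle will be the uniform bounded-geometry estimate: the constants $c(\varepsilon), C(\varepsilon)$ must be independent of the base point and of the iterate, even though $f^{n-1}(W(x,\delta))$ can become a very large piece of a leaf with complicated extrinsic geometry. This requires a uniform $C^1$-modulus of continuity for the leaves of $\mathcal{W}$, which in the expanding (unstable) setting follows from Hölder continuity of the invariant distribution together with the usual graph transform estimates; once this uniform control is established, the rest of the argument is the classical Bowen--Katok machinery relating metric entropy, separated sets, and volume growth.
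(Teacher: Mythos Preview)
First, note that the paper does not prove this statement: Theorem~\ref{teoH} is quoted verbatim from \cite{Hua} as a preliminary tool, so there is no in-paper argument to compare against. Your outline for the equality $h_{top}(f,\mathcal{W})=\chi_{\mathcal{W}}(f)$ is the standard Bowen volume-growth argument and is essentially the one in \cite{Hua}; it is correct. One wording issue: in the lower-bound step you produce \emph{one} specific $(n,\varepsilon)$-spanning set by pulling back centers, which by itself only bounds the minimal spanning number from \emph{above}. What you actually need (and what the expanding hypothesis gives) is that the leafwise Bowen ball $B_n(y,\varepsilon)$ coincides with $f^{-(n-1)}\bigl(W(f^{n-1}(y),\varepsilon)\bigr)$, so the minimal $(n,\varepsilon)$-spanning number \emph{equals} the minimal $\varepsilon$-covering number of $f^{n-1}(\overline{W(x,\delta)})$; then $N_W\ge$ (min spanning) $\ge \mathrm{Vol}/C(\varepsilon)$.

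There is, however, a genuine error in the ``moreover'' part. You write that because the leaf metric $d_W$ dominates the ambient metric $d$, a set that is $(n,\varepsilon)$-separated for $d_W$ is automatically $(n,\varepsilon)$-separated for $d$. The implication runs the wrong way: $d_W\ge d$ means that $d_W(f^ja,f^jb)\ge\varepsilon$ gives no information about $d(f^ja,f^jb)$, since a long leaf segment can have endpoints arbitrarily close in $M$. The repair is to use the \emph{local} comparability of the two metrics: because leaves are uniformly $C^1$-immersed, there are constants $K\ge 1$ and $\varepsilon_0>0$ with $d_W(a,b)\le K\,d(a,b)$ whenever $a,b$ lie on the same leaf and $d_W(a,b)<\varepsilon_0$. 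Given two $(n,\varepsilon)$-$d_W$-separated points, at the \emph{first} time $j$ with $d_W(f^ja,f^jb)\ge\varepsilon$ one has $\varepsilon\le d_W(f^ja,f^jb)\le\Lambda\varepsilon$ (where $\Lambda=\sup\|Df|_{T\mathcal W}\|$), so for $\varepsilon$ small this forces $d(f^ja,f^jb)\ge\varepsilon/K$. Hence $N_W(f,\varepsilon,n,x,\delta)\le N(f,\varepsilon/K,n)$ and the inequality follows. Alternatively, as in \cite{Hua}, one can avoid this metric comparison entirely and deduce $h_{top}(f)\ge h_{top}(f,\mathcal W)$ from the variational principle for unstable entropy together with $h_\mu(f,\mathcal W)\le h_\mu(f)$.
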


From the hypothesis of Theorem \ref{teo4} we can suppose that the eigenvalues of $L$ satisfy $0< |\beta_1^s| < \ldots < |\beta_k^s| < 1 <  |\beta_1^u| < \ldots < |\beta_n^u|. $ The Lyapunov exponents of $L,$ are
$\lambda^s_i(L) = \log(|\beta_i^s|), i =1, \ldots, k$ and $\lambda^u_i(L) = \log(|\beta_i^u|), i =1, \ldots, n.$

Let us introduce a notation $E^{s,L}_{(1, i)} = E^s_1 \oplus \ldots \oplus E^s_i, i=1, \ldots, k$ and  $E^{u,L}_{(1, i)} = E^u_1 \oplus \ldots \oplus E^u_i, i=1, \ldots, n.$ If $j > i,$ we denote $E^{s,L}_{(i, j)} =  E^s_i \oplus \ldots \oplus E^s_j $ and $E^{u,L}_{(i, j)} =  E^u_i \oplus \ldots \oplus E^u_j. $

It is known by Pesin \cite{pesin2004lectures}, that if $f$ is $C^1-$close to $L,$ then $T\mathbb{T}^d$ admits a similar splitting
$E^s_f = E^{s,f}_1 \oplus E^{s,f}_2 \oplus \ldots \oplus E^{s,f}_k $ and  $E^u_L = E^{u,L}_1 \oplus E^{u,L}_2 \oplus \ldots \oplus E^{u,L}_n .$ As before, define $E^{u,f}_{(1,i)} = E^{u,f}_1 \oplus  \ldots \oplus E^{u,f}_i $ and $E^{s,f}_{(1,i)} = E^{s,f}_1 \oplus  \ldots \oplus E^{s,f}_i ,$ analogously, for $i < j,$ we define $E^{s,f}_{(i, j)}$  and $E^{u,f}_{(i, j)}.$

For $f$ we denote by $\lambda^u_{i}(x,f)$ the Lyapunov exponent of $f$ at $x$ in the direction $E^{u,f}_i, i = 1, \ldots, n$ and by  $\lambda^s_{i}(x,f)$ the Lyapunov exponent of $f$ at $x$ in the direction $E^{s,f}_i, i = 1, \ldots, k,$ in the cases that Lyapunov exponents are defined.

By continuity of each subbundle,  we can take the decomposition $E^s_f \oplus E^{u,f}_{(1,i)} \oplus E^{u,f}_{(i+1, n)}$  a uniform partially hyperbolic splitting.

Moreover, by \cite{B}, each $E^{u,f}_{(1,i)} = E^{u,f}_1 \oplus  \ldots \oplus E^{u,f}_i, $ is integrable to an invariant foliation $W^{u,f}_{(1, i)},$ with $i =1, \ldots, n.$ An analogous construction holds for stable directions. Denote by $W^{u,f}_i(x)$ the tangent leaf to $E^{u,f}_i(x),$ analogously we define the leaves $W^{s,f}_i(x).$ These leaves are defined by $W^{u,f}_i(x) = W^{u,f}_{(1, i)}(x) \cap W^{u,f}_{(i, n)}(x). $  By \cite{FPS}, since $f$ is $C^1-$close to $L,$ the conjugacy $h$ between $L$ and $f$ is such that $h(W^{u,L}_{(1,i)}(x)) = W^{u,f}_{(1,i)}(h(x)), i = 1, \ldots, n,$ the same holds for intermediate stable foliations.

Related with the assumption \emph{``every point is regular''} is the concept of constant periodic data. It is a more deeper condition that makes Theorem \ref{teo4} work.

\begin{definition}\label{cpd}
Let $f: M \rightarrow M$ be a local diffeomorphism. We say that $f$
has constant periodic data if for every periodic points $p$ and $q$ of $f,$
the matrixes $Df^{\tau}(p)$ and $Df^{\tau}(q)$ are conjugated, for any  integer $\tau$ such that
$f^{\tau}(p) = p$ and $f^{\tau}(q) = q.$ In particular the set of
Lyapunov exponents of $p$ and $q,$ are equal and each common
Lyapunov exponent has the same multiplicity for both.
\end{definition}

 There is link between \emph{``every point is regular''}  and constant periodic data.  In fact we can verify that  \emph{``every point is regular''}  and  constant periodic data are equivalent conditions for a $C^{1+ \alpha}-$Anosov diffeomorphism $f$ which is $C^1-$close to its linearization $L,$ a diagonalizable over $\mathbb{R}$ and irreducible over $\mathbb{Q}$ Anosov automorphism.

%
%
%

%

\begin{lemma}[Constant periodic data $\Rightarrow$ Lyapunov exponents everywhere]\label{propuniform} Let $L: \mathbb{T}^d \rightarrow \mathbb{T}^d$ be an Anosov linear automorphism, diagonalizable over $\mathbb{R}$ with distinct eigenvalues. If $f$ is a $C^{1+ \alpha}-$Anosov diffeomorphism sufficiently $C^1-$close to $L$ with constant periodic data, then every point $x \in \mathbb{T}^d$ is regular and $\lambda^{\ast}_i(x,f) = \lambda^{\ast}_i(p,f), \ast \in \{s,u\}$ and $p$ is any point in $Per(f).$ Moreover the limits taken as in Oseledec's Theorem converge uniformly.

\end{lemma}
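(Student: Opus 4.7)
The plan is to exploit that, when $L$ is diagonalizable over $\mathbb{R}$ with distinct eigenvalues and $f$ is $C^1$-close to $L$, each of the subbundles $E^{\ast,f}_i$ ($\ast\in\{s,u\}$) is one-dimensional, $Df$-invariant, and Hölder continuous (by the standard theory of dominated splittings for $C^{1+\alpha}$ diffeomorphisms, see \cite{pesin2004lectures}). Consequently, for each $i$ the function
\[
\varphi^{\ast}_i(x)=\log\|Df(x)|_{E^{\ast,f}_i(x)}\|
\]
is a well-defined Hölder continuous real-valued function on $\mathbb{T}^d$. The cocycle point of view reduces the Lyapunov exponent in the direction of $E^{\ast,f}_i$ at $x$ to the Birkhoff average $\frac{1}{n}\sum_{j=0}^{n-1}\varphi^{\ast}_i(f^j(x))$.

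Next I would use constant periodic data to pin these averages down on periodic orbits. For any periodic point $p$ of period $\tau$,
\[
\sum_{j=0}^{\tau-1}\varphi^{\ast}_i(f^j(p))=\log\|Df^{\tau}(p)|_{E^{\ast,f}_i(p)}\|=\tau\,\lambda^{\ast}_i(p,f),
\]
and by hypothesis the right-hand side equals $\tau\lambda^{\ast}_i$ for a constant $\lambda^{\ast}_i$ independent of the periodic orbit. Since $f$ is topologically transitive (being conjugate to the linear Anosov $L$), the classical Livsic theorem for Hölder cocycles over transitive Anosov systems applies to the Hölder function $\varphi^{\ast}_i-\lambda^{\ast}_i$ whose integral over every periodic orbit vanishes. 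It yields a Hölder continuous transfer function $u^{\ast}_i:\mathbb{T}^d\to\mathbb{R}$ with
\[
\varphi^{\ast}_i(x)-\lambda^{\ast}_i = u^{\ast}_i(f(x))-u^{\ast}_i(x).
\]

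Once the coboundary equation is in hand the rest is immediate: telescoping gives
\[
\frac{1}{n}\sum_{j=0}^{n-1}\varphi^{\ast}_i(f^j(x))-\lambda^{\ast}_i=\frac{u^{\ast}_i(f^n(x))-u^{\ast}_i(x)}{n},
\]
and since $u^{\ast}_i$ is continuous on the compact manifold $\mathbb{T}^d$ the right-hand side tends to $0$ uniformly in $x$ as $|n|\to\infty$. Therefore every $x\in\mathbb{T}^d$ is regular, $\lambda^{\ast}_i(x,f)=\lambda^{\ast}_i=\lambda^{\ast}_i(p,f)$ for any $p\in\mathrm{Per}(f)$, and the convergence of the Oseledec limits in each direction is uniform, which together with the $Df$-invariance and Hölder continuity of the splitting yields uniform convergence in the sense of Oseledec's theorem.

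The principal technical point is not the Livsic step itself (which is by now standard, see \cite{Llave92} and references therein) but ensuring that $\varphi^{\ast}_i$ is Hölder, i.e.\ that the one-dimensional invariant subbundles of the dominated splitting inherited from $L$ are actually Hölder continuous for $f$. This holds because of the spectral gap between consecutive eigenvalues of $L$, which persists under $C^1$-perturbation and guarantees a genuine dominated splitting into Hölder one-dimensional line bundles; this is where the assumption that $f$ be sufficiently $C^1$-close to $L$ and $C^{1+\alpha}$ is used in an essential way.
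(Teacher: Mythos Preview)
Your argument is correct and follows essentially the same route as the paper: apply Livsic's theorem to the H\"older observable $\log\|Df|_{E^{\ast,f}_i}\|-\lambda^{\ast}_i$, obtain a continuous transfer function, and telescope to get uniform convergence of the Birkhoff sums. The only cosmetic difference is that the paper first writes the Livsic step for the cumulative Jacobians on the flag bundles $E^{u,f}_{(1,i)}$ and then remarks that the same applies to each one-dimensional $E^{\ast,f}_i$, whereas you go directly to the one-dimensional bundles; both formulations are equivalent once one knows the individual line bundles are H\"older, which you correctly identify as the point where the $C^{1+\alpha}$ hypothesis and the spectral gaps of $L$ enter.
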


\begin{proof}
We argue with Livsic's Theorem. Denote by $\Lambda^{u,f}_{1,i}$ the common value of the sum of the $i-$first unstable Lyapunov exponents of $f$ at periodic points, where $\lambda^u_1(x,f) < \ldots < \lambda^u_n(x,f),$ are the $n-$first unstable Lyapunov exponent of $f$ at a regular point $x.$ Denote by $Jac^u_{(1,i)}f(x)$ the jacobian of $Df(x): E^{u,f}_{(1,i)}(x) \rightarrow E^{u,f}_{(1,i)}(f(x)).$

We see that $\log(|Jac^u_{(1,i)}f(x)|) - \Lambda^{u,f}_{1,i}$ has zero average over every periodic
orbit.

Hence, by Livsic's theorem \cite{Livsic72,Bowen}, we can find a $C^{\varepsilon}$  function $\phi$, for some $\varepsilon > 0,$ such that
$\phi: \torus^d \rightarrow \real$ such that
\begin{equation}  \label{cohomology1}
\log(|Jac^u_{(1,i)}f(x)|) - \Lambda^{u,f}_{1,i} =   \phi(f(x))-  \phi(x).
\end{equation}

Equivalently
\begin{equation} \label{conformal1}
|Jac^u_{(1,i)}f(x)|  = e^{-\phi(x)} e^{\phi(f(x))}e^{\Lambda^{u,f}_{1,i}}.
\end{equation}

By induction \begin{equation} \label{conformal2}
|Jac^u_{(1,i)}f^n(x)|  = e^{-\phi(x)} e^{\phi(f^n(x))}e^{n\Lambda^{u,f}_{1,i}}.
\end{equation}

Since $\phi$ is continuous there is $C > 1,$ such that
$$ C^{-1}e^{n\Lambda^{u,f}_{1,i}}\leq |Jac^u_{(1,i)}f^n(x)| \leq C e^{n\Lambda^{u,f}_{1,i}},$$
so the convergence $$\frac{1}{n} \log(|Jac^u_{(1,i)}f^n(x)|) \rightarrow \Lambda^{u,f}_{1,i} $$
is uniform.

The same we can apply to each $E^{u,f}_i, E^{s,f}_i, E^{s,f}_{(1,i)}$ and their corresponding Lyapunov exponents and sums.

\end{proof}

As a consequence of Lemma \ref{propuniform} and  Theorem \ref{teoH}, we obtain the next Lemma.

\begin{lemma}\label{entropy} Let $f: \mathbb{T}^d \rightarrow \mathbb{T}^d, d \geq 3, $ be a $C^{1+ \alpha}-$Anosov diffeomorphism $C^1$ close to $L: \mathbb{T}^d \rightarrow \mathbb{T}^d, d \geq 3,$ where $L$ is as in Theorem \ref{teo4}. If $f$ has constant periodic data, then for the foliation $W^{u,f}_{(1, i)}$  tangent to $E^{u,f}_{(1, i)},$ holds the equality $h_{top}(f, W^{u,f}_{(1, i)}) = \displaystyle\sum_{j=1}^i \lambda^{u}_j(p, f),$ where  $p$ is any point in $Per(f).$
\end{lemma}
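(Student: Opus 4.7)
My plan is to apply Theorem \ref{teoH}, which identifies $h_{top}(f, W^{u,f}_{(1,i)})$ with the volume growth $\chi_{W^{u,f}_{(1,i)}}(f)$, and then to compute this volume growth using the uniform control on the Jacobian $|Jac^u_{(1,i)} f^n|$ furnished by Lemma \ref{propuniform}.

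Concretely, Lemma \ref{propuniform} (applied to the bundle $E^{u,f}_{(1,i)}$) supplies a continuous function $\phi$ and a constant $C>1$ such that
$$C^{-1} e^{n\Lambda^{u,f}_{1,i}} \leq |Jac^u_{(1,i)} f^n(y)| \leq C\, e^{n\Lambda^{u,f}_{1,i}}$$
for every $y \in \mathbb{T}^d$ and every $n \geq 1$, where $\Lambda^{u,f}_{1,i} = \sum_{j=1}^i \lambda^u_j(p,f)$ is independent of the periodic point $p$. For an arbitrary $x \in \mathbb{T}^d$ and $\delta > 0$, I would then apply the change of variables along the $C^1$ leaves of $W^{u,f}_{(1,i)}$,
$$\mathrm{Vol}\bigl(f^n(W^{u,f}_{(1,i)}(x,\delta))\bigr) = \int_{W^{u,f}_{(1,i)}(x,\delta)} |Jac^u_{(1,i)} f^n(y)|\,dy,$$
and combine this with the uniform bounds above to obtain
$$C^{-1}\,\mathrm{Vol}(W^{u,f}_{(1,i)}(x,\delta))\, e^{n\Lambda^{u,f}_{1,i}} \leq \mathrm{Vol}\bigl(f^n(W^{u,f}_{(1,i)}(x,\delta))\bigr) \leq C\,\mathrm{Vol}(W^{u,f}_{(1,i)}(x,\delta))\, e^{n\Lambda^{u,f}_{1,i}}.$$

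Taking logarithms, dividing by $n$ and letting $n \to \infty$ then gives $\chi_{W^{u,f}_{(1,i)}}(x,\delta) = \Lambda^{u,f}_{1,i}$, uniformly in $x$ and independently of $\delta$. Consequently $\chi_{W^{u,f}_{(1,i)}}(f) = \Lambda^{u,f}_{1,i}$, and Theorem \ref{teoH} yields
$$h_{top}(f, W^{u,f}_{(1,i)}) \;=\; \chi_{W^{u,f}_{(1,i)}}(f) \;=\; \Lambda^{u,f}_{1,i} \;=\; \sum_{j=1}^i \lambda^u_j(p,f),$$
for any $p \in Per(f)$, which is the claim.

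I do not foresee a serious obstacle: the substantive work has already been done in Lemma \ref{propuniform}, which converts the constant-periodic-data hypothesis, via Livsic's theorem, into a uniform multiplicative cohomological identity for the unstable Jacobian along $E^{u,f}_{(1,i)}$. The only mild point to check is that $f$ restricts to a $C^1$ diffeomorphism on each leaf of the $f$-invariant foliation $W^{u,f}_{(1,i)}$, so that the change of variables is legitimate; this is granted by $f \in C^{1+\alpha}$ together with the integrability of $E^{u,f}_{(1,i)}$ recalled from \cite{B}.
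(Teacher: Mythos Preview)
Your proposal is correct and follows essentially the same route as the paper: use Lemma~\ref{propuniform} to obtain uniform multiplicative bounds on $|Jac^u_{(1,i)} f^n|$, deduce that the volume growth $\chi_{W^{u,f}_{(1,i)}}(x,\delta)$ equals $\sum_{j=1}^i \lambda^u_j(p,f)$ for every $x$, and then invoke Theorem~\ref{teoH}. If anything, your version is slightly more careful, writing the volume as an integral of the Jacobian over the ball rather than evaluating at the center point as the paper does.
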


\begin{proof} Let $x \in \mathbb{T}^d  $ be an arbitrary point.
$$\lim_{n \rightarrow + \infty}\frac{1}{n} \log(Vol(f^n((W^{u,f}_{(1, i)}(x, \delta)))) = \lim_{n \rightarrow +\infty}\frac{1}{n} \log(Jac^u_{(1,i)} f^n(x)\cdot Vol(W^{u,f}_{(1, i)}(x, \delta)) ).$$
Using Lemma \ref{propuniform}, the right side of the above expression converges uniformly to $\sum_{j=1}^i \lambda^u_{j}(p,f)$ where $p $ is any point in $ Per(f).$ So for any $x \in \mathbb{T}^d,$ holds $\chi_{W^{u,f}_{(1, i)}} (x, \delta) = \sum_{j=1}^i \lambda^u_{j}(p,f).$ It implies $\chi_{W^{u,f}_{(1, i)}}(f) = \sum_{j=1}^i \lambda^u_{j}(p,f). $

By Theorem C of \cite{Hua}, we obtain  $h_{top}(f, W^{u,f}_{(1, i)}) = \displaystyle\sum_{j=1}^i \lambda^{u}_j(p,f),$ as required.
\end{proof}

Note that the Lemma \ref{propuniform} asserts that constant periodic data implies every point is regular. In the same setting we can prove the converse by using specification property.

\begin{lemma}[Lyapunov exponents everywhere $\Rightarrow$ Constant periodic data]\label{spc}
Let $f: \mathbb{T}^d \rightarrow \mathbb{T}^d , d \geq 3,$ be a $C^1$ Anosov diffeomorphism such that every point is regular and $f$ admits an invariant decomposition of the tangent bundle as sum of one dimension and $Df-$invariant sub bundles:
 \[
  \begin{split}
& E^s_f = E^s_1 \oplus E^s_2 \oplus \ldots \oplus E^s_k \\
& E^u_f = E^u_1 \oplus E^u_2 \oplus \ldots \oplus E^u_n,
\end{split}
\]
then $f$ has constant periodic data.
\end{lemma}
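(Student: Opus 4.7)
The plan is to use Bowen's specification property of Anosov diffeomorphisms to force a contradiction: if the Lyapunov exponents along some $E^{\ast,f}_i$ differed between two periodic orbits, then one could construct a single point of $\mathbb{T}^d$ whose Birkhoff averages oscillate, violating the hypothesis that every point is regular.

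First I would reformulate regularity as convergence of a Birkhoff average. Since each subbundle $E^{\ast,f}_i$ is one-dimensional, continuous, and $Df$-invariant, the function
$$\psi^{\ast}_i(x) \,=\, \log \|Df(x)|_{E^{\ast,f}_i(x)}\|$$
is continuous on $\mathbb{T}^d$, and the telescoping identity
$$\frac{1}{n}\log\|Df^n(x)|_{E^{\ast,f}_i(x)}\| \;=\; \frac{1}{n}\sum_{k=0}^{n-1}\psi^{\ast}_i(f^k x)$$
holds. Consequently, regularity of $x$ along the splitting is equivalent to convergence of the Birkhoff averages of each continuous potential $\psi^{\ast}_i$ along the orbit of $x$, and at a periodic point $p$ the limit is exactly $\lambda^{\ast}_i(p,f)$.

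Next, I would argue by contradiction, assuming there exist periodic $p,q$ of $f$ and an index $i_0$ (say with $\ast=u$) such that $a:=\lambda^{u}_{i_0}(p,f)<\lambda^{u}_{i_0}(q,f)=:b$. Because $f$ is Anosov and hence topologically mixing on $\mathbb{T}^d$, Bowen's specification property holds: for every $\varepsilon>0$ there exists $N=N(\varepsilon)$ such that any finite family of orbit segments can be $\varepsilon$-shadowed by a single orbit of $f$, with gaps between consecutive shadowings of length at least $N$. I would fix $\varepsilon>0$ small enough, using uniform continuity of $\psi^u_{i_0}$, so that $|\psi^u_{i_0}(y)-\psi^u_{i_0}(z)|<(b-a)/100$ whenever $d(y,z)<\varepsilon$, then pick a rapidly increasing sequence $m_1<m_2<m_3<\cdots$ for which $m_{j+1}$ dominates $m_1+\cdots+m_j$ plus all accumulated gap lengths. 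Specification then produces $x\in\mathbb{T}^d$ whose orbit successively $\varepsilon$-shadows $m_j$ full periods of $p$ for odd $j$ and $m_j$ full periods of $q$ for even $j$, with gaps of length $N$ in between. A direct estimate --- using that $\psi^u_{i_0}$ has orbit-average $a$ along $p$ and $b$ along $q$, that deviations inside each shadowed block are at most $(b-a)/100$, and that the gap contributions are bounded by a fixed constant times $N$ and hence negligible compared to the block lengths --- shows that the partial Birkhoff averages of $\psi^u_{i_0}$ at $x$ lie within $O((b-a)/100)$ of $a$ at the end of each odd block and within $O((b-a)/100)$ of $b$ at the end of each even block. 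These averages oscillate, so they do not converge, contradicting regularity of $x$.

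This argument yields $\lambda^{\ast}_i(p,f)=\lambda^{\ast}_i(q,f)$ for every pair of periodic points $p,q$ and every $i,\ast$. Because each $E^{\ast,f}_i$ is one-dimensional, in a basis adapted to the splitting the matrix $Df^{\tau}(p)$ is diagonal; its diagonal entries agree with those of $Df^{\tau}(q)$ in absolute value by the matching Lyapunov exponents, and in sign either by continuity after orienting each line bundle $E^{\ast,f}_i$ or, in the Anosov setting in which this lemma is applied, by comparison with the corresponding eigenvalue of the linearization $L$. The diagonal matrices are then conjugate, which is exactly the definition of constant periodic data. The main technical obstacle I anticipate is the combinatorial bookkeeping in the specification step: one has to choose the block lengths $m_j$ and the shadowing precision $\varepsilon$ jointly so that the oscillation of the Birkhoff averages stays strictly separated from any potential limit, but this is a standard arrangement once uniform continuity and uniform boundedness of the potentials $\psi^{\ast}_i$ are in hand.
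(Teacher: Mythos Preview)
Your argument is correct, but it follows a genuinely different route from the paper's own proof. The paper uses a Hopf-type argument via local product structure: for any $x_0$ and any nearby $z$, one picks $z'\in W^s_f(x_0)\cap W^u_f(z)$ and chains
\[
\lambda^u_i(x_0,f)=\lim_{n\to+\infty}\tfrac1n\log J^u_i f^n(x_0)=\lim_{n\to+\infty}\tfrac1n\log J^u_i f^n(z')=\lim_{n\to-\infty}\tfrac1n\log J^u_i f^n(z)=\lambda^u_i(z,f),
\]
using that on a common stable leaf the forward limits agree, on a common unstable leaf the backward limits agree, and that at a regular point forward and backward limits coincide. This makes $x\mapsto\lambda^u_i(x,f)$ locally constant, hence constant on the connected torus. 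That argument is considerably shorter than specification and avoids the block-length bookkeeping. Your approach, by contrast, only needs convergence of the \emph{forward} Birkhoff averages of $\psi^{\ast}_i$, so it would go through under the weaker hypothesis that forward Lyapunov exponents exist everywhere; the Hopf argument genuinely needs two-sided regularity. Amusingly, the paper announces just before the lemma that one ``can prove the converse by using specification property'', which is exactly your method, but then presents the Hopf argument instead. Both proofs end by showing the Lyapunov exponents $\lambda^{\ast}_i(\cdot,f)$ are constant; neither fully settles the matrix-conjugacy clause of the definition of constant periodic data, but only equality of exponents is used in the rest of the paper.
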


\begin{proof} The argument here is similar to Hopf argument, using local product structure. Denote by $J^u_if(x)$ the jacobian of $f$ restricted to $E^u_i$ at $x$ and $\lambda^u_i (x,f)$ the Lyapunov of $f$ at $x$ in the $E^u_i$ direction.

Let $x_0$ be an arbitrary point on $\mathbb{T}^d$ and consider the Lyapunov exponent $ \lambda^u_i (x_0,f).$  Since $f$ have local product structure, there is an open neighborhood $V$ of $x_0,$ such that,  given $z \in V,$ there is a point $z'\in V \cap W^u_f(z)\cap W^s_f(x_0).$ Since every point is regular we have

$$ \lambda^u_i (x_0,f) = \displaystyle\lim_{n \rightarrow +\infty}\frac{1}{n} \log(J^u_if^n(x_0)) =
\displaystyle\lim_{n \rightarrow +\infty}\frac{1}{n} \log(J^u_if^n(z')) = \displaystyle\lim_{n \rightarrow -\infty}\frac{1}{n} \log(J^u_if^n(z)) =  \lambda^u_i (z,f). $$

The map $x \mapsto \lambda^u_i (x,f)$ is locally constant. Since $\mathbb{T}^d$ is connect, $x \mapsto \lambda^u_i (x,f)$ is  constant on $\mathbb{T}^d.$

\end{proof}

\section{Proof of Theorem \ref{teo4}}

From Lemma \ref{propuniform} and Lemma \ref{spc}, in Theorem \ref{teo4} we can replace the condition \textit{every point is regular} by $f$\textit{ has constant periodic data} which we will use in the proof from now on.

\begin{proof}  Since $h(W^{u,L}_{(1, i)}) =  W^{u,f}_{(1, i)},$ it implies $h_{top}(f, W^{u,f}_{(1, i)}) = h_{top}(L, W^{u,L}_{(1, i)}).$ Now, consider $ \beta^s_i, i =1, \ldots, k,$ and  $ \beta^u_i, i =1, \ldots, n,$ the eigenvalues of $L,$ such that
$$0 < |\beta^s_1| <  |\beta^s_2| < \ldots < |\beta^s_k|< 1 < |\beta^u_1| <  |\beta^u_2| < \ldots < |\beta^s_n| .$$

Let $p$ be a periodic point of $f.$  Since $f$ has constant periodic data, so for any $i =1, \ldots, n$ by Lemma \ref{entropy}
$$\lambda^u_1(p, f) + \ldots + \lambda^u_i(p, f) =  h_{top}(f, W^u_{(1, i)}) = h_{top}(L, W^u_{(1, i)}(L) ) = \lambda^u_1(L) + \ldots + \lambda^u_i(L), $$
 for any $i =1, \ldots, n.$
So, for $i = 1,$
$$\lambda^u_1(p, f) = \lambda^u_1(L), $$
for $i = 2,$ we get $\lambda^u_1(p, f) + \lambda^u_2(p, f)  =  \lambda^u_1(L) + \lambda^u_2(L),$ since $\lambda^u_1(p, f) = \lambda^u_1(L),$ so
$$\lambda^u_2(p, f) = \lambda^u_2(L).$$
Analogously $\lambda^u_i(p, f) = \lambda^u_i(L), i =1, \ldots, n.$

Taking the  inverses, we obtain $$\lambda^s_i(p, f) = \lambda^s_i(L), i =1, \ldots, k,$$
note that $f$ and $L$ has the same periodic data, by \cite{Go} and \cite{SY}, the maps $f$ and $L$ are $C^{1+ \varepsilon}$ conjugated for some $\varepsilon > 0,$  if $f$ is enough  $C^1-$close to $L.$
\end{proof}

\begin{remark} From a remarkable result in \cite{GoBoot}, we observe that in dimension three, if $f$ is $C^{\infty}$ as in Theorem \ref{teo4}, then the conjugacy $h$ is also $C^{\infty}.$
\end{remark}

\section{From continuous to differentiable conjugacy}

In the previous section, we obtained that $f$ and $L$ have same periodic data. In \cite{Go, GoGu} the authors provide a proof of differentiability of the conjugacy by an argument involving Gibbs measures on intermediate foliations. This kind of argument is also applied in \cite{SY}. Here, we present a topological argument to pass from continuity to differentiability  the conjugacy in Theorem \ref{teo4}.

Let us introduce conformal distances on each invariant one dimensional leaf.

\begin{lemma} There exists a metric $d^u_i$ on each leaf $W^{u,f}_i(x)$ tangent to $E^{u,f}_i,$  such that  $d^u_i(f(a), f(b)) = e^{\lambda^u_i} d^u_i(a, b),$ where $\lambda^u_i$ the common value of the Lyapunov exponents of periodic points of $f$ and $L$ relative to directions $E^{u,i}_f$ and $E^{u,i}_L$ respectively.
\end{lemma}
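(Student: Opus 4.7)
The plan is to construct the conformal metric via a Livsic cohomological equation, exactly parallel to the argument already used in Lemma \ref{propuniform}, but now applied to each one-dimensional subbundle separately.

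First I would note that, under the hypotheses of Theorem \ref{teo4}, the bundle $E^{u,f}_i$ is one-dimensional (it is the perturbation of the one-dimensional $E^{u,L}_i$, which is one-dimensional because $L$ has simple real eigenvalues). Hence we may speak unambiguously of the scalar Jacobian $J^u_i f(x)$ of $Df(x)$ restricted to $E^{u,f}_i(x)$, using the ambient Riemannian norm. Since $f$ has constant periodic data (by the conclusion of the previous section) and $\lambda^u_i$ is the common value of the $i$-th unstable Lyapunov exponent at periodic points, the function
\begin{equation*}
\psi_i(x) := \log|J^u_i f(x)| - \lambda^u_i
\end{equation*}
has zero sum along every periodic orbit of $f$. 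Since $f$ is $C^{1+\alpha}$ Anosov and $\psi_i$ is H\"older (being the log of a H\"older function bounded away from zero), Livsic's theorem yields a H\"older function $\phi_i : \mathbb{T}^d \to \mathbb{R}$ satisfying the coboundary equation
\begin{equation*}
\log|J^u_i f(x)| - \lambda^u_i = \phi_i(f(x)) - \phi_i(x).
\end{equation*}

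Next I would define a new conformal norm on $E^{u,f}_i(x)$ by $\|v\|_i := e^{-\phi_i(x)} \|v\|$ for $v \in E^{u,f}_i(x)$, where $\|\cdot\|$ is the ambient Riemannian norm. A direct computation using the coboundary equation gives
\begin{equation*}
\|Df(x)v\|_i = e^{-\phi_i(f(x))} |J^u_i f(x)| \, \|v\| = e^{\lambda^u_i - \phi_i(x)} \|v\| = e^{\lambda^u_i} \|v\|_i.
\end{equation*}
Because $E^{u,f}_i$ is one-dimensional and tangent to the leaves of $W^{u,f}_i$, integrating $\|\cdot\|_i$ along arcs of $W^{u,f}_i(x)$ defines a leafwise distance $d^u_i$, and the pointwise conformality of $Df|_{E^{u,f}_i}$ immediately transfers (by change of variables in the arc-length integral) to the identity $d^u_i(f(a),f(b)) = e^{\lambda^u_i} d^u_i(a,b)$.

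The main conceptual point to get right, rather than an actual obstacle, is ensuring the Livsic hypotheses apply cleanly to each one-dimensional sub-bundle: this requires H\"older continuity of $x \mapsto E^{u,f}_i(x)$, which follows from the uniform partial hyperbolicity of the splittings $E^s_f \oplus E^{u,f}_{(1,i-1)} \oplus E^{u,f}_i \oplus E^{u,f}_{(i+1,n)}$ noted in the preliminaries, together with $C^{1+\alpha}$ regularity of $f$. Given that, $\psi_i$ is H\"older and Livsic applies to produce a H\"older (hence continuous, hence bounded) $\phi_i$, so $\|\cdot\|_i$ is equivalent to $\|\cdot\|$ on each leaf and the leafwise metric $d^u_i$ is well-defined, finite on compact arcs, and induces the leaf topology.
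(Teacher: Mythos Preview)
Your proposal is correct and follows essentially the same route as the paper: apply Livsic's theorem to the coboundary $\log|J^u_i f| - \lambda^u_i$, use the resulting transfer function to rescale the ambient norm on $E^{u,f}_i$ by the conformal factor $e^{-\phi_i}$, and integrate along leaves to obtain $d^u_i$. The paper writes out the change-of-variables computation for $d^u_i(f(a),f(b))$ explicitly, but the content is identical to what you describe.
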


\begin{proof}
Denote by $\lambda^u_i$ the common value of the Lyapunov exponents of periodic points of $f$ and $L$  in the directions $E^{u,f}_i$ and $E^{u,L}_i,$  respectively. Let us to denote on $\mathbb{T}^d,$ the $f-$invariant foliations $\mathcal{F}^{\ast,f}_i$ tangent to $E^{\ast,f}_i, \ast \in \{s,u\}.$

We see that $\log (||Df(x)|E^{u,f}_i(x)||) -\lambda^u_i$ has zero average over every periodic
orbit.

Since $f$ is a $C^{1+ \alpha}-$Anosov diffeomorphism, the map $x \mapsto \log (||Df(x)|E^{u,f}_i(x)||) $  is uniform $C^{\varepsilon}$ on $\mathbb{T}^d,$ for some $\varepsilon > 0.$ Hence, by Livsic's theorem \cite{Livsic72,Bo74}, we can find a $C^{\varepsilon}-$function $\phi^u_i$ such that,
$\phi^u_i: \mathbb{T}^d \rightarrow \mathbb{R}$ such that
\begin{equation}  \label{cohomology}
\log (||Df(x)|E^{u,f}_i(x)||) - \lambda^u_i =    \phi^u_i(f(x)) - \phi^u_i(x).
\end{equation}

Equivalently
\begin{equation} \label{conformal}
e^{\phi^u_i(x)}||Df(x)|E^{u,f}_i(x)|| e^{-\phi^u_i(f(x))} = e^{\lambda^u_i}.
\end{equation}

We can interpret \eqref{conformal} as saying that, if we define a
metric, conformal to the standard metric in the torus by a factor
$e^{-\phi^u_i},$ then for a convenient metric $f$ expands on $W^{u,f}_i-$leaves  by exactly
$e^{\lambda^u_i}.$

In fact, fix an orientation on $W^{u,f}_i(x)$ and consider $a \geq b$ on $W^{u,f}_i(x),$ consider the metric
$$d^u_i(a, b) = \int_a^b e^{-\phi^u_i(x)}dx, $$ where $dx$ denotes the infinitesimal size on $W^{u,f}_i(x).$ With this

$$ d^u_i(f(a), f(b)) = \int_{f(a)}^{f(b)} e^{-\phi^u_i(y)}dy = \int_a^b e^{-\phi^u_i(f(x))}||Df(x)|E^{u,f}_i(x)|| dx =$$ $$= e^{\lambda^u_i} \int_a^b e^{-\phi^u_i(x)}dx = e^{\lambda^u_i} d^u_i(a,b).$$

\end{proof}

Also we need the following proposition.

\begin{proposition}[Proposition 8.2.2 of \cite{AH}]\label{propAH} Let $L : \mathbb{R}^n \rightarrow \mathbb{R}^n$ be a hyperbolic linear automorphism
and let $T : \mathbb{R}^n \rightarrow \mathbb{R}^n$ be a homeomorphism. If $\bar{d}( L, T)$ is finite, then there is a
unique map $\phi : \mathbb{R}^n \rightarrow \mathbb{R}^n$ such that
\begin{enumerate}
\item $L \circ \phi = \phi \circ T,$
\item $\bar{d}(\phi, id_{\mathbb{R}^n})$ is finite.

\hspace{-1.7cm}Furthermore, for $K > 0$ there is a constant $\delta_K > 0$ such that if $\bar{d}(L,T) < K,$

\hspace{-1.7cm}then the above map $\phi$ has the following properties :

\item $\bar{d}(\phi, id_{\mathbb{R}^n}) < \delta_K,$
\item  $\phi$ is a continuous surjection,
\item  $\phi$ is uniformly continuous under $\bar{d}$ if so is T.
\end{enumerate}
\end{proposition}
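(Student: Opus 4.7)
The plan is to linearize the semi-conjugacy equation and then solve it by a contraction argument adapted to the hyperbolic splitting of $L$. Writing $\phi = \mathrm{id} + u$ with $u:\mathbb{R}^n\to\mathbb{R}^n$ bounded, the identity $L\circ\phi = \phi\circ T$ becomes
\[
L u - u \circ T = T - L =: g,
\]
and the hypothesis $\bar{d}(L,T)<\infty$ is exactly $\|g\|_\infty<\infty$. I work in the Banach space of bounded continuous maps with the sup norm; uniqueness of $u$ in this space is equivalent to uniqueness of $\phi$ subject to condition (2).

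Next, I exploit the hyperbolic splitting $\mathbb{R}^n = E^s\oplus E^u$ of $L$ by projecting componentwise. The stable part $L^s u^s - u^s\circ T = g^s$ rewrites as the fixed-point equation
\[
u^s(x) = L^s\, u^s(T^{-1}x) - g^s(T^{-1}x),
\]
which is a contraction with factor $\|L^s\|<1$ because precomposition with $T^{-1}$ is an isometry of the sup norm. The unstable part $L^u u^u - u^u\circ T = g^u$ rewrites instead as
\[
u^u(x) = (L^u)^{-1} u^u(Tx) + (L^u)^{-1} g^u(x),
\]
which is a contraction with factor $\|(L^u)^{-1}\|<1$. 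Banach's fixed-point theorem then supplies unique bounded $u^s, u^u$, hence a unique bounded $u$, and so a unique $\phi$ satisfying (1) and (2). Property (3) follows from the geometric-series estimate $\|u\|_\infty \le C\|g\|_\infty$ with $C$ depending only on $L$, so $\delta_K$ can be taken linear in $K$.

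For the continuity and uniform continuity statements, note that both fixed points are uniform limits of iterates of their respective contractions starting from $0$. Each iterate is continuous, and is uniformly continuous with a quantitative modulus of continuity, whenever $g$ has the same property, which in turn holds whenever $T$ does (since $L$ is linear and hence Lipschitz). This gives the continuity half of (4) together with all of (5).

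The main obstacle I anticipate is surjectivity, which I would handle by a degree argument. Since $\phi = \mathrm{id}+u$ with $\|u\|_\infty<\infty$, the map $\phi$ is proper and $|\phi(x)|\to\infty$ as $|x|\to\infty$. Fix $y\in\mathbb{R}^n$ and choose $R > |y| + \|u\|_\infty$. On $\partial B_R(0)$ the straight-line homotopy $\phi_t := \mathrm{id}+tu$, $t\in[0,1]$, avoids $y$ because $|\phi_t(x)-y| \ge R - |y| - \|u\|_\infty > 0$; hence homotopy invariance of the Brouwer degree yields $\deg(\phi, B_R(0), y) = \deg(\mathrm{id}, B_R(0), y) = 1$, so $y\in \phi(B_R(0))$. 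As $y$ was arbitrary, $\phi$ is surjective, which completes (4). This route sidesteps the alternative approach of swapping the roles of $L$ and $T$, which would require $T$ itself to be hyperbolic, a hypothesis not available here.
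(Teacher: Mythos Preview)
The paper does not give its own proof of this proposition; it is merely quoted from Aoki--Hiraide \cite{AH} and used as a black box in the subsequent lemma. There is therefore nothing in the paper to compare your argument against.

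That said, your approach is the classical one (essentially Moser's method for structural stability): rewrite $\phi=\mathrm{id}+u$, reduce to the linear functional equation $Lu-u\circ T=g$ with $g=T-L$ bounded, split along $E^s\oplus E^u$, and solve each component by a Banach contraction. This is the standard route and, modulo two routine caveats, it is correct. First, the inequalities $\|L|_{E^s}\|<1$ and $\|(L|_{E^u})^{-1}\|<1$ need an adapted norm (or one iterates $N$ times and contracts $(L|_{E^s})^N$); you should say so explicitly. Second, your stable branch is written in terms of $T^{-1}$, so your argument for~(5) actually uses uniform continuity of $T^{-1}$, not just of $T$; in the application in the paper (lifts of torus homeomorphisms) this is automatic, but in the abstract statement you should either assume it or note that the series for $u^s$ can be estimated by dualizing the unstable argument for $L^{-1}$. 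Your degree argument for surjectivity is clean and avoids the need for any hyperbolicity of $T$.
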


To pass from continuity to differentiability we will make an induction process based on Gogolev method \cite{Go}. In this work, it is proved the following induction steps:

\begin{enumerate}
\item If $h$ is $C^{1 + \nu}$ on $W^{u,f}_{1, m-1}$ and $h(W^{u,f}_i) = W^{u,L}_i, i = 1, \ldots, m-1,$ then $h(W^{u,f}_m) = W^{u,L}_m.$
\item If $h$ is $h(W^{u,f}_m) = W^{u,L}_m, m = 1, \ldots, n,$ then $h$ is $C^{1+ \alpha}$ restricted on each $W^{u,f}_m.$
\end{enumerate}

The proof of the step $(1)$ is topological and the one of step $(2)$ is based on a construction of a Gibbs measure on each leaf $W^{u,f}_m.$ Assuming the topological argument in the step $(1)$ we prove step $(2)$ via conformal metrics.

As we said before, by \cite{FPS}, since $f$ is $C^1-$close to $L,$ the conjugacy $h$ between $L$ and $f$ is such that $h(W^{u,L}_{(1,i)}(x)) = W^{u,f}_{(1,i)}(h(x)), i = 1, \ldots, n,$ the same holds for stable foliations. Assuming topological step (1) from Gogolev in \cite{Go}, to pass to continuity from differentiable is sufficient to prove the next Lemma and finalize the proof using Journ\'{e}'s Lemma, as we will see latter.

\begin{lemma} Suppose that $h$ is $h(W^{u,L}_m) = W^{u,f}_m, m = 1, \ldots, n,$ then $h$ is $C^{1+ \varepsilon}$ restricted on each $W^{u,f}_m, m = 1, \ldots, n,$ for some $\varepsilon > 0$ enough small.
\end{lemma}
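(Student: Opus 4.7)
My plan is to exploit the conformal metric $d^u_m$ from the previous lemma to reduce the claim to the uniqueness part of Proposition \ref{propAH}: the leafwise conjugacy $h|_{W^{u,L}_m(x)}$ will be shown to be a $d^u_m$-isometry, and once this is established the $C^{1+\varepsilon}$ smoothness of $h$ along leaves follows because the conformal parameterizations have derivative $e^{\pm\phi^u_m}$, which is Hölder of exponent $\varepsilon$ by Livsic and bounded away from $0$ on $\mathbb{T}^d$.

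First I set up the leafwise picture through a periodic point. Apply the previous lemma to $f$ to obtain the conformal metric $d^u_m$ on each $W^{u,f}_m$-leaf, in which $f$ expands uniformly by the factor $e^{\lambda^u_m}$; on $W^{u,L}_m$-leaves the standard Euclidean arc length already has the analogous property because $L$ is linear with eigenvalue of modulus $e^{\lambda^u_m}$ on $E^{u,L}_m$. Choose an $L$-periodic point $p$ of period $\tau$; then $h(p)$ is $f$-periodic of period $\tau$, and the leaves $W^{u,L}_m(p)$ and $W^{u,f}_m(h(p))$ are invariant under $L^{\tau}$ and $f^{\tau}$ respectively. Parameterize them by $\gamma_L,\gamma_f:\mathbb{R}\to\mathrm{leaf}$ via conformal arc length with $\gamma_L(0)=p$, $\gamma_f(0)=h(p)$. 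In these coordinates both $L^{\tau}$ and $f^{\tau}$ become the same map $y\mapsto e^{\tau\lambda^u_m}y$, and $\tilde h:=\gamma_f^{-1}\circ h\circ\gamma_L$ is a homeomorphism of $\mathbb{R}$ fixing $0$ with $\tilde h(e^{\tau\lambda^u_m}t)=e^{\tau\lambda^u_m}\tilde h(t)$. Now invoke Proposition \ref{propAH} in dimension one with the hyperbolic linear map $y\mapsto e^{\tau\lambda^u_m}y$ playing the role of both $L$ and $T$: the identity satisfies the intertwining relation trivially at $\bar d$-distance $0$, and $\tilde h$ has $\bar d(\tilde h,\mathrm{id}_{\mathbb{R}})<\infty$ because the lift $\bar h:\mathbb{R}^d\to\mathbb{R}^d$ satisfies $\bar d(\bar h,\mathrm{id}_{\mathbb{R}^d})<\infty$ and the conformal arc-length parameterizations differ from Euclidean arc length on lifted leaves by a bounded bi-Lipschitz factor. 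Uniqueness forces $\tilde h=\mathrm{id}_{\mathbb{R}}$, i.e., $h$ is a $d^u_m$-isometry from $W^{u,L}_m(p)$ to $W^{u,f}_m(h(p))$.

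Next I extend the isometry property from periodic leaves to arbitrary leaves, and conclude. Because $L$-periodic points are dense in $\mathbb{T}^d$ and the foliations $W^{u,L}_m,W^{u,f}_m$ vary continuously, for arbitrary $x$ I choose periodic $p_n\to x$ so that $W^{u,L}_m(p_n)\to W^{u,L}_m(x)$ in $C^1$; taking $a_n,b_n\in W^{u,L}_m(p_n)$ with $a_n\to a$, $b_n\to b\in W^{u,L}_m(x)$ and passing to the limit in the identity $d^u_m(h(a_n),h(b_n))=d(a_n,b_n)$ (valid on each periodic leaf) with continuity of $h$ and of $\phi^u_m$, I obtain $d^u_m(h(a),h(b))=d(a,b)$, so $h|_{W^{u,L}_m(x)}$ is a $d^u_m$-isometry. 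Writing this in conformal parameterizations with $\gamma_L(0)=x$ and $\gamma_f(0)=h(x)$, the induced map is an isometry of $\mathbb{R}$ fixing $0$, hence $\pm\mathrm{id}$, and after choosing compatible orientations we may take it to be $\mathrm{id}$. Since $\gamma_L$ is linear and $\gamma_f$ is a $C^{1+\varepsilon}$ diffeomorphism (the density $e^{\phi^u_m}$ is $C^\varepsilon$ and bounded away from $0$ and $\infty$), the composition $h|_{W^{u,L}_m(x)}=\gamma_f\circ\gamma_L^{-1}$ is $C^{1+\varepsilon}$, as required. The main obstacles I anticipate are (a) the $\bar d$-boundedness step, which requires carefully passing between the Euclidean metric on the universal cover, the Euclidean arc length along a lifted leaf, and the conformal arc length, and (b) the density-plus-continuity extension from periodic leaves to arbitrary leaves, since different leaves use different origins for their conformal charts.
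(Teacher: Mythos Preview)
Your overall plan---show that $h$ restricted to each $W^{u,L}_m$-leaf is an isometry from Euclidean arc length to the conformal metric $d^u_m$, and then read off $C^{1+\varepsilon}$ regularity from the H\"older density $e^{\phi^u_m}$---is exactly the paper's strategy. The difficulty is entirely in the step you flag as obstacle~(a), and the justification you give there does not work.

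Concretely: you apply Proposition~\ref{propAH} on $\mathbb{R}$ with $L=T$ equal to multiplication by $\mu=e^{\tau\lambda^u_m}$, and you need $\bar d(\tilde h,\mathrm{id}_{\mathbb{R}})<\infty$. But ``the conformal arc-length parameterizations differ from Euclidean arc length by a bounded bi-Lipschitz factor'' only yields a \emph{multiplicative} estimate $C^{-1}|t|\le|\tilde h(t)|\le C|t|$, not the \emph{additive} bound $|\tilde h(t)-t|\le C$. In fact, every increasing homeomorphism of $\mathbb{R}$ fixing $0$ and commuting with $t\mapsto\mu t$ has the form $\tilde h(t)=t\,e^{g(\log|t|)}$ with $g$ continuous and $(\log\mu)$-periodic; such $\tilde h$ is automatically bi-Lipschitz, yet $\tilde h-\mathrm{id}$ is bounded if and only if $g\equiv 0$. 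So the hypothesis you feed into Proposition~\ref{propAH} is equivalent to the conclusion you want, and the argument is circular. Said differently, the Livsic solution $\phi^u_m$ is only determined up to an additive constant, and without pinning down that constant the map $h$ is a priori only a $d^u_m$-\emph{similarity} of some unknown ratio $e^{c}$ along leaves, not an isometry; your boundedness claim is precisely the statement $c=0$.

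The paper circumvents this by never reducing to a one-dimensional application of Proposition~\ref{propAH}. Instead it builds a comparison map $\tilde h$ that is \emph{forced} to agree with $h$ on an explicit grid of points $a_j$ (spaced at unit Euclidean distance along each $W^{u,L}_i$-leaf, with $\phi^u_{i_j}$ renormalized so that $d^u_i(h(a_j),h(a_{j+1}))=1$), then lifts to $\mathbb{R}^d$ and uses a fundamental-domain argument to show the lift $\tilde H$ stays in the same $H$-image of each domain as $H$ does. This gives $\|\tilde H-H\|_\infty<\infty$ directly in $\mathbb{R}^d$, and Proposition~\ref{propAH} (applied in $\mathbb{R}^d$, not in $\mathbb{R}$) then identifies $\tilde H$ with $H$. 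The grid-matching is exactly the missing normalization; once you impose it, your density/continuity extension to non-periodic leaves becomes unnecessary, since $\tilde h$ is already defined on every leaf.
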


\begin{proof}

We go to prove the differentiability of the conjugacy between $f$ and $L,$ by using the conformal metrics on each one dimensional invariant foliation of $f.$

Let $h: \mathbb{T}^d \rightarrow \mathbb{T}^d$ be the conjugacy between $f$ and $L,$ such that $$h \circ L = f \circ h.$$

We first observe that, since $h$ sends $W^{u,f}_i$ leaves in $W^{u,L}_i$ leaves then $h$ induces naturally a conjugacy $\mathcal{H}: \mathbb{T}^d/\mathcal{F}^{u,L}_i \rightarrow \mathbb{T}^d/\mathcal{F}^{u,f}_i.$

Let us introduce a leaf equivalence on the unstable leaves $W^{u,f}_i.$ We say that two unstable leaves $W$ and $W',$ tangent to $E^{u,L}_i,$ are related if there is an integer $n$ such that $f^n(W) = W'.$
For each equivalence $[W]$ class choose a representantive $W, $ and a point  $a_0 \in W.$ Fix orientations on the foliations $\mathcal{F}^{u,L}_i$ and $\mathcal{F}^{u,L}_i$ and suppose that $h$ preserves the fixed orientation.  Up to change $(L, f)$ by $(L^2, f^2)$ we can suppose  $L$ and $f$ preserve the orientations established.

 Using this orientation, choose points $a_j , j \in \mathbb{Z}$ such that $a_{j} < a_{j+1}$ and $|a_j - a_{j+1}| = 1,$ where $|u - v|$ is the euclidean distance iduced on $W.$ In fact we are seeing $W$ as a real line. Let $b_j = h(a_j), j \in \mathbb{Z}.$ For each $j$ we choose a function $\phi^u_{i_j}$ such that $d^u_i$ is such that $d^u_i (b_j, b_{j+1}) = 1.$ To simplify the writing,  we denote by $[p, q]$ a segment connecting points $p$ and $q$ on a leaf of type $W^{u,L}_i$ and $W^{u,f}_i.$ The same notation we will use for leaves lifted on $\mathbb{R}^d.$

Let us to define a map $\tilde{h}: [a_j, a_{j+1}] \rightarrow [b_j, b_{j+1}], $ using $\phi^u_{i_j}$ and the corresponding $d^u_i$ such that $\tilde{h}(\theta)$  is the unique point $p$ in $[b_j, b_{j+1}]$ such that $d^u_i(b_j , p ) = |a_j - \theta|.$  Also, for the given $j,$ using $\phi^u_{i_j}$ and the corresponding $d^u_i$ we define $\tilde{h}: [L^n (a_j), L^n(a_{j+1})] \rightarrow [f^n(b_j), f^n(b_{j+1})] $ following the same strategy before, for each $n \in \mathbb{Z}.$ By construction, $\tilde{h}$ and $h$ coincide on the extremes of intervals, as defined.

We have defined a map $\tilde{h}$ on every leaf of $[W],$ moreover it satisfies $\tilde{h} \circ L = f \circ \tilde{h}.$ In fact, consider $\theta \in [a_0, a_1]$ such that $|a - \theta | = \alpha.$ By definition $d^u_i (\tilde{h}(a_0), \tilde{h}(\theta)) = \alpha.$ Taking the first iterated we get $|L(a_0) - L(\theta)| = e^{\lambda^u_i}\alpha$ and $d^u_i(f(\tilde{h}(a_0)) , f(\tilde{h}(\theta))) = e^{\lambda^u_i} d^u_i (\tilde{h}(a_0), \tilde{h}(\theta)) = e^{\lambda^u_i} \alpha.$ By definition $f (\tilde{h} (\theta)) = \tilde{h}(L(\theta)).$ The same works for any $[a_j, a_{j+1}]$ and its iterated by $L^n, n \in \mathbb{\Z}.$  Varying on all equivalence classes we get a new map $\tilde{h}: \mathbb{T}^d \rightarrow \mathbb{T}^d , $ such that $\tilde{h} \circ L = f \circ \tilde{h}.$ Since $\tilde{h}$ is bijective restricted to each leaf $W^{u,L}_i$ and $\tilde{h}(W^{u,L}_i) = h(W^{u,L}_i) ,$ so $\tilde{h}$ is a bijection.

We can describe $\tilde{h}$ as a solution of a specific ordinary differential equation. In fact, given a leaf $W = W^{u,L}_i,$  $\tilde{h}:[a_0, a_1] \rightarrow [b_0, b_1]$ is defined by
\begin{equation}\label{ODE}
z' = e^{\phi^u_{i_0}(z)}, z(a_0) = b_0.
\end{equation}

In fact, let $z:[a_0,a_1] \rightarrow [b_0, b_1]$ be a solution of the differential equation $(\ref{ODE}).$ Let $a_0 \leq \theta \leq a_1,$ we have
$z'(t)e^{-\phi^u_{i_0}(z(t))} = 1,  $ for any $t \in [a_0, a_1],$ so

$$ \theta - a_0 = \int_{a_0}^{\theta} e^{-\phi^u_{i_0}(z(t))} z'(t) dt =  \int_{z(a_0)}^{z(\theta)} e^{-\phi^u_{i_0}(s)} ds  = d^u_i(z(a_0),z(\theta) ) = d^u_i(b_0,z(\theta) ) ,   $$
here $ds$ denote  the infinitesimal length arc of $W^{u,f}_i(b_0),$  so $z(\theta) = \tilde{h}(\theta).$  The same can be done for any values $j$ and intervals $[L^n(a_j), L^n(a_{j+1})], n \in \mathbb{Z},$ on $W^{u,L}_i-$leaves. In particular the differential equations of kind $(\ref{ODE})$ have unique solution.

Let $H: \mathbb{R}^d \rightarrow \mathbb{R}^d $ be the lift of $h,$ and $\pi: \mathbb{R}^d \rightarrow \mathbb{T}^d,$ the natural projection. In $\mathbb{R}^d,$ consider fundamental domains $D$ of kind $[0,1)^d + c,$ for $c \in \mathbb{Z}^d.$ For a domain $D,$ we define $\tilde{H}: D \rightarrow H(D), $ given by
$$\tilde{H}(q) = (\pi_{|H(D)})^{-1}(\tilde{h}(\pi(q))),  $$
in other words $\tilde{H}(q)$ is the unique point in $ p \in H(D)$ such that $\pi(p) = \tilde{H}(\pi(q)).$ Particularly $\tilde{H}(q + c) = \tilde{H}(q),$ for any $q \in \mathbb{R}^d$  and $c \in \mathbb{Z}^d.$  Since $\tilde{h}$ is bijection, $\tilde{H}$ so is.

Let $\bar{f}, \bar{L}: \R^d \rightarrow \R^d $ be the lifts of $f$ and $L$  respectively we have $\tilde{H}\circ \bar{L} = \bar{f}\circ \tilde{H}. $

Consider $W$ a  $ W^{u,L}_i-$leaf for which we have chosen points $a_j, j \in \mathbb{Z}$ and $\overline{W}$ a lift of $W$ in $\mathbb{R}^d.$ Let $\bar{a}_0, \bar{a}_1$ points  in $\overline{W}$ such that $\pi(\bar{a}_j) = a_j, j=0,1.$ Suppose that the segment $[\bar{a}_0, \bar{a}_1]$ is  contained in  $\overline{W},$ and connecting $\bar{a}_0, \bar{a}_1,$ crosses domains $D_1,  \ldots , D_k.$ Consider $\gamma_1, \ldots, \gamma_k$ such that $\gamma_i = D_i \cap [\bar{a}_0, \bar{a}_1], $ the connect component of $\overline{W} \cap D_i.$ Let $\delta_i = H(\gamma_i) \cap H(D_i),$  the connect component of $H([\bar{a}_0, \bar{a}_1]) \cap H(D_i).$

Since $\tilde{h}$  is constructed as solution of an O.D.E, $\tilde{h}(a_i) = h(a_i), i=0,1$ and $h([a_0, a_1]) = \tilde{h} ([a_0, a_1]),$ we get $\tilde{H}:[\bar{a}_0, \bar{a}_1] \rightarrow H([\bar{a}_0, \bar{a}_1])$ is a homeomorphism. By continuity $\tilde{H}(\gamma_i) \subset \delta_i,$ since $\tilde{H}([\bar{a}_0, \bar{a}_1])$ is connected, then  $\tilde{H}(\gamma_i)$ and $\tilde{H}(\gamma_{i+1})$ are connected by extremes. We conclude $\tilde{H}(\gamma_i) = \delta_i = H(\gamma_i).$ Arguing similarly using segments $[\overline{L^n(a_j)}, \overline{L^n(a_{j+1})}], j, n \in \mathbb{Z},$ for all $W^{u,L}_i-$leaves, we conclude that

\begin{equation} \label{distance1}
x \in D \Rightarrow H(x), \tilde{H}(x) \in H(D).
\end{equation}

So there is $K > 0$ such that

\begin{equation} \label{distance2}
x \in \mathbb{R}^d \Rightarrow || H(x) - \tilde{H}(x)|| \leq K .
\end{equation}

Finally, since $H$ is the lift of $h,$ we have $|| H(x) - x|| \leq R,$ for any $x \in \mathbb{R}^d$ and we conclude

\begin{equation} \label{distance3}
x \in \mathbb{R}^d \Rightarrow ||  \tilde{H}(x) - x|| \leq R + K .
\end{equation}

By Proposition $\ref{propAH}$ we conclude $H = \tilde{H},$ consequently $h = \tilde{h}.$ Note that by $(\ref{ODE})$ the conjugacy $h$ restricted to $W^{u,L}_i$ leaves are $C^{1+\varepsilon},$ for some small $\varepsilon > 0,$ except possibly at points of type $L^n(a_j).$  We observe that we can do the same construction with points $c_j$ on leaves $W,$ such that $c_j$ is the middle point between $[a_j, a_{j+1}].$ We so conclude  the conjugacy $h$ restricted to $W^{u,L}_i$ leaves are $C^{1+\varepsilon},$ for some small $\varepsilon > 0,$ except possibly at points of type $L^n(c_j).$ Since the sets of points $L^n(a_j)$ and points $L^n(c_j)$ are mutually disjoint, we conclude that $h$ restricted to $W^{u,L}_i$ leaves is in fact $C^{1+\varepsilon},$ for some small $\varepsilon > 0.$

\end{proof}

To finalize our argument we evoke Journ\'{e}'s Lemma.

\begin{lemma}[ Journ\'{e}'s Lemma, \cite{Journe}] Let $W$ and $V$ be two mutually transverse uniformly
continuous foliations with $C^r$
leaves on a manifold $M.$ Let $\varphi: M \rightarrow \mathbb{R}$ be a function.
Assume that $\varphi \in C^{r + \nu}_V(M) \cap C^{r + \nu}_W(M), $ for some $\nu \geq 0.$   Then $\varphi$ is $C^{r + \nu},$ if $\nu > 0,$ otherwise $\varphi$ is $C^{r-\varepsilon},$ for any $\varepsilon > 0.$
\end{lemma}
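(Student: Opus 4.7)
The plan is to prove Journé's Lemma via the Campanato-type characterization of Hölder regularity: a continuous function $\varphi$ on an open subset $U$ of $\mathbb{R}^n$ lies in $C^{r+\nu}$ (for $r \in \mathbb{Z}_{\geq 0}$ and $\nu \in (0,1)$) if and only if there exists a family of polynomials $\{P_p\}_{p \in U}$ of degree at most $r$ satisfying $|\varphi(q) - P_p(q)| \leq C|q-p|^{r+\nu}$ uniformly for $p \in U$ and $q$ in a fixed-size neighborhood of $p$. The whole game is thus to build such an ambient family of polynomials starting from the leafwise polynomial approximations supplied by the hypothesis.

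First I would localize at a point $p \in M$ and pass to a chart in which $T_pW$ and $T_pV$ span complementary coordinate subspaces, possible by transversality and continuity of the foliations. The leaves of $W$ and $V$ through nearby points are then $C^r$ graphs over those subspaces, and transversality gives a local product-type structure: every $q$ sufficiently close to $p$ determines a unique intermediate point $p'$ lying in $W(p) \cap V(q)$. The hypothesis $\varphi \in C^{r+\nu}_W(M) \cap C^{r+\nu}_V(M)$ then provides, for each $x$, leaf polynomials $P^W_x$ on $W(x)$ and $P^V_x$ on $V(x)$ of degree $\leq r$ with $|\varphi - P^W_x| \leq C|\cdot - x|^{r+\nu}$ along $W(x)$ and the analogous bound along $V(x)$, with uniform constants as $x$ varies.

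The heart of the argument is to glue these two families of leaf polynomials into a single ambient polynomial $P_p$ on a chart around $p$. Writing $q = (w, v)$ in product-type coordinates coming from the two foliations, one takes $P_p(w, v)$ to be built from the coefficients of $P^W_p$ together with the transverse data encoded by $P^V_{p'(w)}$, reconciled at $p$ via their common Taylor data. The approximation bound $|\varphi(q) - P_p(q)| \leq C|q-p|^{r+\nu}$ is then verified by decomposing
\[
\varphi(q) - P_p(q) = \bigl[\varphi(q) - \varphi(p')\bigr] + \bigl[\varphi(p') - \varphi(p)\bigr] - \bigl[P_p(q) - P_p(p)\bigr],
\]
bounding the first two brackets by the leafwise estimates along $V(p')$ and $W(p)$ respectively, and matching the third bracket to these by the construction of $P_p$.

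The main obstacle is controlling how the leaf polynomial $P^W_x$ depends on $x$ as $x$ moves transverse to $W$. The foliations are only $C^r$ along their leaves, with potentially rough transverse holonomy, so $P^W_x$ cannot be differentiated in the transverse direction by fiat. The resolution is a self-consistent bootstrap: the coefficients of $P^W_x$ are, up to error of order $r+\nu$, intrinsic $W$-leaf derivatives of $\varphi$, and these are themselves $C^{r+\nu}$ along $V$-leaves by hypothesis; a symmetric statement holds for $P^V_x$. This interlocking transverse regularity of the leaf polynomials is precisely what closes the estimate. Finally, when $\nu = 0$, the same construction yields only Zygmund-class control at the integer level $r$, which embeds into $C^{r-\varepsilon}$ for every $\varepsilon > 0$ via the standard inclusions among Hölder-Zygmund spaces, accounting for the endpoint loss in the statement.
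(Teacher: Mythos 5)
The paper does not prove Journ\'e's Lemma at all; it is quoted as an external result with the citation \cite{Journe}, and the authors simply invoke it at the end of the proof of Theorem~\ref{teo4}. So there is no internal proof to compare against, and any attempted proof is going beyond what the paper itself supplies.

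As for the proposal on its own merits: the high-level strategy --- localize, put the two foliations in product-type coordinates, and use the Campanato characterization of $C^{r+\nu}$ via uniform degree-$r$ polynomial approximation on balls --- is indeed the right framework, and it is the one used by Journ\'e and by subsequent expositions. But the argument as written has a real gap at the step you describe as the ``self-consistent bootstrap.'' You assert that the coefficients of the leaf polynomial $P^W_x$, being essentially leafwise $W$-derivatives of $\varphi$, ``are themselves $C^{r+\nu}$ along $V$-leaves by hypothesis.'' That is not what the hypothesis gives. The assumption $\varphi\in C^{r+\nu}_V(M)$ controls the restriction of $\varphi$, not of its $W$-directional derivatives, along $V$-leaves; the transverse regularity of these leaf derivatives is precisely what must be \emph{established}, and obtaining it is the whole content of Journ\'e's argument. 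In the genuine proof one does not assume it: one runs an honest iterative scheme, showing that global $C^{s}$ regularity for some $s<r+\nu$, combined with leafwise $C^{r+\nu}$ regularity along both transversal foliations, upgrades to $C^{s'}$ for an explicitly larger $s'$, with a quantitative telescoping estimate on polynomial approximations at dyadic scales; starting at $s=0$ and iterating yields the result, with the endpoint loss at $\nu=0$ coming from the Zygmund-versus-Lipschitz distinction at integer exponents, as you correctly note. Without that quantitative induction, the decomposition $\varphi(q)-P_p(q)=[\varphi(q)-\varphi(p')]+[\varphi(p')-\varphi(p)]-[P_p(q)-P_p(p)]$ only bounds the first two brackets; the third bracket cannot be matched because you have no a priori control over how $P_p$ (built from transverse data at $p'$) differs from what the leafwise estimate at $p$ predicts.
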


Note that leaves of type $W^{u,L}_1$ and $W^{u,L}_2$ are transversal and subfoliate $W^{u,L}_{1,2},$ so by Journ\'{e}'s Lemma \cite{Journe}, we get $h$ is  uniformly $C^{1 + \varepsilon},$ for some $\varepsilon > 0,$ enough small, on the unstable leaves $W^{u,L}_{(1,2)}.$ Inductively $h$ is uniformly $C^{1 + \varepsilon},$ for some $\varepsilon > 0,$ enough small, on the unstable leaves $W^{u,L}_{(1,n)} = W^u_L.$ Analogously $h$ is  uniformly $C^{1 + \varepsilon},$ for some $\varepsilon > 0,$ enough small, on the stable leaves $W^s_L.$ Finally, by Journ\'{e}'s Lemma \cite{Journe}, $h$ is $C^{1 + \varepsilon},$ for some $\varepsilon > 0.$

\section{A brief comment on the previous proof}

In the previous proof, for each equivalence class $[W]$ of leaves of a type of $L-$invariant leafs we find functions $\phi_j$ defined on $[L^n(a_j), L^n(a_{j+1})], n \in \mathbb{Z}.$  The equations of kind $(\ref{ODE})$ determined the conjugagy. In fact it is possible to prove that there is a unique choice of $\phi_j$ for all $\mathbb{T}^d.$ We go to explain below.
Suppose that for $[a_0, a_1] \subset W$ we have determined the function $\phi.$ Consider $W$ an expanding leaf of an invariant foliation $W^{u,f}_i$ such that it has dense orbit. Moreover, for any $\varepsilon > 0$ there is $N \geq 0,$ integer such that, $W_n := L^n([a_0, a_1])$ is $\varepsilon-$dense on $\mathbb{T}^d$ for any $n \geq N.$ Suppose $I$ a closed interval on a leaf of $W^{u,f}_i.$ By the previous section, the conjugacy $h$ restricted to $I$ is defined by an equation $$z' = e^{\psi (z)}, z(x_0) = y_0.$$

On $W_n$ consider intervals $J_n \subset W_n$ and points $x_n \in J_n$ such that $J_n \rightarrow I$ and $x_n \rightarrow x.$ Consider $z_n,$ the conjugacy $h$ restricted to $J_n,$ is defined by $$z' = e^{\phi(z)}, z(x_n) = y_n.$$ By uniform continuity of $h,$ we conclude $z_n \rightarrow z$ uniformly. By the other hand $z_n \rightarrow u$ a solution of $u' = e^{\phi (u)}, u(x_0) = y_0.$ So $z = u$ in $I.$ 

Finally $e^{\phi(z(t))} - e^{\psi(z(t))} = 0,$ for any $t \in I,$ we conclude that $e^{\phi(x)} = e^{\psi(x)},$ for any $x \in h(I).$ Then $\phi = \psi.$  Do it for all $I$ and so $\phi = \psi.$

\section{Comments on dimensions two and three}

In dimensions two and three, using the same techniques to prove Theorem \ref{teo4} we can obtain more stronger versions of rigidity results.

\subsection{Dimension two} In the case of $\mathbb{T}^2$ consider $f: \mathbb{T}^2 \rightarrow \mathbb{T}^2 $ a $C^{1+\alpha}-$Anosov diffeomorphism and  $L: \mathbb{T}^2 \rightarrow \mathbb{T}^2$ its linearization of $f.$ Consider $h$ the conjugacy between $f$ and $L.$ If the diffeomorphism $f$ has all Lyapunov exponents defined everywhere, using Lemma \ref{spc} we obtain $f$ has constant periodic data. Since $h$ preserves invariant foliations, using  Lemma \ref{entropy} we conclude that $f$ and $L$ have same periodic data. By \cite{Llave92}, $f$ and $L$ are $C^{1 + \varepsilon}$ conjugated. Moreover if $f$ is $C^{\infty},$ then $h $ so is.

\subsection{Dimension three} In the case of $\mathbb{T}^3$ consider $f: \mathbb{T}^3 \rightarrow \mathbb{T}^3 $ a $C^{1+\alpha}-$Anosov diffeomorphism such that $E^u_f = E^{wu}_f \oplus E^{su}_f$ respectively weak and strong unstable directions which are invariant by $Df.$ If $L: \mathbb{T}^3 \rightarrow \mathbb{T}^3$ is the linearization of $f,$ by \cite{H} is known that $E^u_L = E^{wu}_L \oplus E^{su}_L.$ Considering $h$ the conjugacy between $f$ and $L,$ since $f$ has all Lyapunov exponents defined everywhere, using  Lemma \ref{spc} we obtain $f$ has constant periodic data and since $h$ applies leaves $W^{wu}_f$ in $W^{wu}_L,$ as in Lemma \ref{entropy} we conclude that $f$ and $L$ have same periodic data. By \cite{GoGu}, $f$ and $L$ are $C^{1 + \varepsilon}$ conjugated.

\section{Proof of Theorem \ref{teo7}}

Here we give an outline of the construction of examples in \cite{Llave92}. In fact such construction provide counter-examples of Theorem \ref{teo4} in the absence of the irreducibility over $\mathbb{Q}$ hypothesis.

In \cite{Llave92} the author describes how to obtain a $C^{\infty}-$Anosov diffeomorphisms  $f: \mathbb{T}^d \rightarrow \mathbb{T}^d, d \geq 4,$ arbitrarily $C^1-$close to a linear Anosov automorphism, which is $C^k$ but not $C^{k+1}$ conjugated to its corresponding linearization.
In few lines, let $A: \mathbb{T}^2 \rightarrow \mathbb{T}^2,$ and $B:  \mathbb{T}^{d-2} \rightarrow \mathbb{T}^{d-2} $ be a linear Anosov automorphisms with simple real spectrum. Take $1 \leq n < m  $ integer numbers and consider $f(x,y) = (A^nx,B^my + \psi(x) e_u ),$ where $\psi: \mathbb{T}^2 \rightarrow \mathbb{R} $ is a enough small $C^{\infty}-$map and $e_u$ is an unstable eigenvector of $B,$ with $Be_u = \lambda \cdot e_u$.  Since $\psi $ is enough small then $f$ is a $C^{\infty}-$Anosov map $C^1-$close to $L(x,y) = (A^nx,B^my). $ The numbers $m,n$ can be chosen such that $L$ is diagonalizable over $\mathbb{R}$ with distinct eigenvalues. Let $\mu,$ such that $|\mu| > 1$ be the unstable eigenvalue of $A.$  By \cite{Llave92}, it is possible to choose $\psi $ sufficiently small such $f$ and $L$ are $C^{\alpha}$ conjugated to $L$ for any $0 \leq \alpha < \frac{n \log(|\mu|)}{m \log(|\lambda|)},$ but not $C^{\alpha}$ conjugated, for any $\alpha > \frac{n \log(|\mu|)}{m \log(|\lambda|)}.$ Since we arrange correctly $n,m$ we conclude that $f$ and $L$ are $C^0$ conjugated but not Lipschitz conjugated.

On the other hand, by construction, the derivative of $f$ is given by
\begin{equation}\label{block}
Df(x,y) = \left[
\begin{array}{ccc}
A^n &\theta(x,y)\\
0 &B^m
\end{array}
\right].
\end{equation}

The equation \eqref{block} implies that $f$ has same constant periodic data, since product of matrixes of kind given by \eqref{block} is a matrix with this same type. For $L$ we have $E^u_L = E^{u, L}_1 \oplus \ldots  \oplus E^{u, L}_r$ and $E^s_L = E^{s,L}_1 \oplus \ldots  \oplus E^{s,L}_k,$ all subbundles with dimension one. By \cite{pesin2004lectures}, for $f$ we get $E^u_f = E^{u, f}_1 \oplus \ldots  \oplus E^{u, f}_r$ and $E^s_f = E^{s,f}_1 \oplus \ldots  \oplus E^{s,f}_k,$ since $\psi$ can be taken sufficiently small.
Since $f$ has constant periodic data, with same periodic data of $L,$ by applying Lemma \ref{propuniform}, every point is regular. But $f$ and $L$ are not $C^1-$ conjugated to $L.$

The difference of the above example and Theorem \ref{teo4} is the fact that the characteristic polynomial $P$ of $Df^{\tau}(p),$ is such that $$P = P_1\cdot P_2,$$
where $p $ is a periodic point of $f,$ with period $\tau > 0$ and $P_1, P_2$ are the characteristic polynomial of $A^{\tau n}$ and $B^{\tau m}$ respectively. So $P$ doesn't satisfy the assumption of irreducibility over $\mathbb{Q}.$

In fact, given $r \geq 0$ an integer number,  by density of $\mathbb{Q},$ by a suitable choice of the integers $m,n$ as above,  it is possible to obtain $f: \mathbb{T}^d \rightarrow \mathbb{T}^d ,$ a $C^{\infty}-$Anosov diffeomorphism such that every point is regular such that  $f$ is $C^r$ but not $C^{r+1}$ conjugated to it is linearization $L.$


\begin{thebibliography}{RRRRRR}


\bibitem{AH}
N. Aoki, K. Hiraide,
\newblock Topological Theory of Dynamical Systems.
\newblock {\em Mathematical Library,
North Holland 1994.} MR 95m:58095.

\bibitem{BB}
A. Baraviera and C. Bonatti,
\newblock Removing zero Lyapunov Exponents.
\newblock {\em Ergodic Theory
of Dynamical Systems,} 23 :1655--1670, 2003.



\bibitem{Barreira}
L. Barreira, Ya. Pesin,
\newblock Lyapunov exponents and Smooth Ergodic Theory,
\ Univ. Lect. Series, 23, Providence, RI: American Mathematical Society, 2002.

\bibitem{BarreiraP}
L. Barreira, Ya. Pesin,
\newblock Nonuniform Hyperbolicity: Dynamics of Systems with nonzero Lyapunov exponents,
\ Encyclopedia of Mathematics and Its Applications, Cambridge: Cambridge Univ. Press, 2007.





\bibitem{Bo74}
R. Bowen,
\newblock Periodic Points and Measures for Axiom A Diffeomorphisms .
\newblock {\em Transactions of the American Mathematical Society,} Vol. 154: 377--397, 1971.

\bibitem{Bowen}
R. Bowen,
\newblock Equilibrium States
and the Ergodic Theory
of Anosov Diffeomorphisms.
\newblock {\em Lecture Notes in Mathematics,} 470, Springer 1975.

\bibitem{B}
 M. Brin,
\newblock On Dynamical Coherence.
\newblock {\em Ergodic Theory and Dynamical Dystems,} vol. 23, No. 2: 395--401, Cambridge Univ Press, 2003.

%

\bibitem{FPS}
T. Fisher,  R.Potrie,  and M.Sambarino,
\newblock Dynamical Coherence of Partially Hyperbolic Diffeomorphisms of Tori Isotopic to Anosov.
\newblock {\em Mathematische Zeitschrift,} 278(1-2): 149--168, 2014.

\bibitem{FRANKS}
J.Franks,
\newblock Anosov Diffeomorphisms on Tori.
\newblock {\em Transactions of the American Mathematical Society,} 145: 117--124, 1969.



\bibitem{Go}
A. Gogolev,
\newblock Smooth Conjugacy of Anosov Diffeomorphisms on Higher Dimensional Tori.
\newblock {\em Journal of Modern Dynamics,} 2 no.4: 645--700, 2008.

\bibitem{Go2}
A. Gogolev,
\newblock  How typical are pathological foliations in partially hyperbolic dynamics:
an example.
{\em Israel J. Math,} 187(2012),  493--507.

\bibitem{GoBoot}
A. Gogolev,
\newblock Bootstrap for local rigidity of Anosov automorphisms of the 3-torus.
{\em Comm. Math. Phys.,}352, no. 2: 439--455, 2017.

\bibitem{GoGu}
A. Gogolev, M. Guysinsky,
\newblock $C^1-$ Differentiable Conjugacy on Three Dimensional Torus.
{\em DCDS-A,} 22(2008), no. 1/2:  183--200.

\bibitem{H}
A. Hammerlindl,
\newblock Leaf conjugacies on the torus.
 {\em Ergodic Theory and
Dynamical Systems,} 33 (2013), no. 3, 896--933.

\bibitem{Hua}
H. Hu, Y. Hua, W. Wu,
\newblock Unstable Entropies and Variatonal Principle for Partially Hyperbolic Diffeomorphisms.
\newblock {\em Advances in Mathematics,} 321: 31--68, 2017.

%

\bibitem{Journe}
J-L. Journ\'{e},
\newblock A regularity lemma for functions of several variables.
\newblock {\em Rev. Mat. Iber.} 4: 187--193, 1988.

\bibitem{K}
A. Katok, B. Hasselblat,
\newblock Introduction to the Modern Theory of Dynamical Systems.
\newblock {\em Encyclopedia of Mathematics and its applications,} volume 54.

%

\bibitem{Livsic72}
A. Livsic,
\newblock Cohomology of dynamical systems.
{\em Math. USSR-Izv.,}6: 1278--1301, (1972).



\bibitem{Llave92}
R. de la Llave,
\newblock Smooth conjugacy and SRB measures for uniformly and non-uniformly systems.
{\em Comm. Math. Phys.,} 150(2): 289--320, 1992.

%
%

\bibitem{MT}
F. Micena, A. Tahzibi,
\newblock Regularity of foliation and Lyapunov exponents of partially hyperbolic dynamics on $3-$torus.
\newblock {\em Nonlinearity,} 26(4): 1071--1082, 2013.

\bibitem{MT2}
F. Micena, A. Tahzibi.
\newblock A Note on Rigidity of Anosov Diffeomorphism of the Three Torus.
\newblock {\em Proceedings of the American Mathematical Society,} 147(6), 2453--2463, 2019.


\bibitem{Oseledets}
V. Oseledets,
\newblock Multiplicative Ergodic Theorem. Lyapunov Characteristic Numbers for Dynamical Systems.
\newblock {\em Trans. Moscow Math. Soc.,} Vol. 19: 197--221, 1968.



\bibitem{pesin2004lectures}
Y. Pesin,
\newblock Lectures on Partial Hyperbolicity and Stable Ergodicity.
\newblock {\em European Mathematical Society,} 2004.

%
%
%



\bibitem{SY}
R. Saghin, J. Yang,
\newblock Lyapunov Exponents and Rigidity of Anosov Automorphisms and Skew Products.
\newblock {\em Advances in Mathematics,} 355(2019).
%

%
%
\bibitem{Viana14}
M. Viana.
\newblock Lectures on Lyapunov Exponents.
Cambridge University Press, 2014.

\end{thebibliography}
\end{document}